\newcommand{\bea}{\begin{eqnarray}}
\newcommand{\eea}{\end{eqnarray}}
\newcommand{\R}{\mathbb R}
\newcommand{\D}{\displaystyle}
\newcommand{\Div}{\mathrm{div}}
\newcommand{\Od}{\Omega\setminus\overline{D}}
\newcommand{\vp}{\varphi}
\newcommand{\Om}{\Omega}
\newcommand{\pa}{\partial}
\newtheorem{theorem}{Theorem}[section]
\newtheorem{lemma}[theorem]{Lemma}
\newtheorem{proposition}[theorem]{Proposition}
\newtheorem{remark}[theorem]{Remark}
\newtheorem{corollary}[theorem]{Corollary}
\newtheorem{definition}[theorem]{Definition}
\title{Size estimates of an obstacle in a stationary Stokes fluid}
\author{E. Beretta$^1$, C. Cavaterra$^{2}$, J. H. Ortega$^3$ \& S. Zamorano$^{3,4}$}
\begin{document}

\maketitle

\footnotetext[1]{Dipartimento di Matematica, Politecnico di Milano, Milano 20133, Italy \\
E-mail: {\it elena.beretta@polimi.it}}

\footnotetext[2]{Dipartimento di Matematica, Universit\`{a} degli Studi di Milano, Milano 20133, Italy \\
E-mail: {\it cecilia.cavaterra@unimi.it}}

\footnotetext[3]{Centro de Modelamiento Matem\'atico (CMM) and Departamento de Ingenier\'ia Matem\'atica,
Universidad de Chile (UMI CNRS 2807), Avenida Beauchef 851, Ed. Norte, Casilla 170-3, Correo 3, Santiago, Chile \\
E-mail: {\it jortega@dim.uchile.cl}, {\it szamorano@dim.uchile.cl}}

\footnotetext[4]{Basque Center for Applied Mathematics - BCAM, Mazarredo 14, E-48009, Bilbao, Basque Country, Spain}

\begin{abstract}
In this work we are interested in estimating the size of a cavity $D$ immersed in a bounded domain $\Omega\subset \R^d,$ $d=2,3,$ filled with a viscous fluid
governed by the Stokes system, by means of velocity and Cauchy forces on the external boundary $\partial\Omega$.
More precisely, we establish some lower and upper bounds in terms of the difference between the
external measurements when the obstacle is present and without the object. The proof of the result is based on  interior regularity results and quantitative
estimates of unique continuation for the solution of the Stokes system.
\end{abstract}

\smallskip\

\noindent{AMS classification scheme numbers\/} : 35R30, 65M32, 76D07, 76D03

\smallskip\

\noindent{\it Keywords\/} : Inverse Problems, Stokes System, Size Estimate, Interior Regularity, Boundary Value Problems, Numerical Analysis.

\section{\bfseries Introduction}
\
\par

We consider an obstacle $D$ immersed in a region $\Omega\subset \mathbb{R}^d$ $(d=2,3)$ which is filled with a viscous fluid.
Then, the velocity vector $u$ and the scalar pressure $p$ of the fluid in the presence of the obstacle $D$ fulfill the following boundary value
problem for the Stokes system:
\begin{equation}\label{P1}
\left\{\begin{array}{rllll}
-\Div(\sigma(u,p))&=&0& ,& \textrm{ in }\Od,\\
\Div u&=&0&,&\textrm{ in } \Od,\\
u&=&g&,& \textrm{ on }\pa\Om,\\
u&=&0&,&\textrm{ on }\pa D,
\end{array}
\right.
\end{equation}
where $\sigma(u,p)=2\mu e(u)-pI$ is the stress tensor,  $e(u)=\frac{(\nabla u+\nabla u^{T})}{2}$ is the strain tensor, $I$ is the identity matrix of order
$d\times d$, $n$ denotes the exterior unit normal to $\pa\Om$ and $\mu >0$ is the kinematic viscosity. The condition $u|_{\pa D}=0$ is the so called
\emph{no-slip} condition.

Given the boundary velocity $g\in (H^{3/2}(\partial\Omega))^d$ satisfying the compatibility condition
\begin{equation*}
\int_{\pa\Om} g\cdot n =0,
\end{equation*}
we consider the solution to Problem \eqref{P1}, $(u,p)\in (H^1(\Omega\backslash \overline{D}))^d\times L^2(\Omega\backslash \overline{D})$, and measure the corresponding Cauchy force
on $\partial\Omega$, $\D\psi=\sigma(u,p) n|_{\pa\Om}$, in order to recover the  obstacle $D$.
Then, it is well known that this inverse problem has a unique solution. In fact, in \cite{alvarez2005identification}, the authors prove uniqueness
in the case of the steady-state and evolutionary Stokes system using unique continuation property of solutions.
By uniqueness we mean the following fact: if $u_1$ and $u_2$ are two solutions of \eqref{P1} corresponding to a given boundary
data $g$, for obstacles $D_1$ and $D_2$ respectively, and $\sigma(u_1,p_1) n=\sigma(u_2,p_2) n$ on an open subset $\Gamma_0\subset \pa\Om$,
then $D_1=D_2$.  Moreover, in \cite{ballerini2010stable}, $\log-\log$ type stability estimates for the Hausdorff distance between the boundaries
of two cavities in terms of the Cauchy forces have been derived.
Reconstruction algorithms for the detection of the obstacle have been proposed in \cite{CCG2015} and in  \cite{heck2007reconstruction}.
The method used in \cite{heck2007reconstruction} relies on the construction of special complex geometrical optics solutions for the
stationary Stokes equation with a variable viscosity. In \cite{CCG2015}, the detection algorithm is based on topological sensitivity and
shape derivatives of a suitable functional.
We would like to mention that there hold $\log$ type stability estimates for the Hausdorff distance between the boundaries of two cavities in terms of
boundary data, also in the case of conducting cavities and elastic cavities (see \cite{ABRV2001}, \cite{CHY2001} and \cite{MR2004}).
These very weak stability estimates reveal that the problem is severly ill posed limiting the possibility of efficient reconstruction of the
unknown object and motivating mathematically, but also from the point of view of applications, the importance of the identification of partial
information on the unknown obstacle $D$ like, for example, the size.

In literature we can find several results concerning the determination of inclusions or cavities and the estimate of their sizes related to
different kind of models. Without being exhaustive, we quote some of them.
For example in \cite{kang2012sharp} and \cite{kang2013bounds} the problem of estimating the volume of inclusions is analyzed using a finite number
of boundary measurements in electrical impedance tomography.
In \cite{doubova2015some},  the authors prove uniqueness, stability and reconstruction of an immersed obstacle in a system modeled by a linear wave equation.
These results are obtained applying the unique continuation property for the wave equation and in the
two dimensional case the inverse problem is transformed in a well-posed problem for a suitable cost functional.
We can also mention \cite{heck2007reconstruction}, in which it is analyzed the problem of reconstructing obstacles inside a bounded domain
filled with an incompressible fluid by means of special complex geometrical optics solutions for the stationary Stokes equation.

Here we follow the approach introduced by Alessandrini et al. in \cite{alessandrini2002detecting} and in \cite{morassi2003detecting}
and we establish a quantitative estimate of the size of the obstacle D, i.e. $|D|$, in terms of suitable boundary measurements. More precisely,
let us denote by $(u_0,p_0)\in (H^1(\Omega))^d\times L^2(\Omega)$ the velocity vector of the fluid and the pressure in the absence of the obstacle $D$,
namely the solution to the Dirichlet problem
\begin{equation}\label{ec1}
\left\{\begin{array}{rllll}
-\Div(\sigma(u_0,p_0))&=&0& ,& \textrm{ in }\Omega,\\
\Div \ u_0&=&0&,&\textrm{ in } \Omega,\\
u_0&=&g&,& \textrm{ on }\partial\Omega.
\end{array}
\right.
\end{equation}
and let $\D\psi_0=\sigma(u_0,p_0) n|_{\pa\Om}$.
We consider now the following quantities
\begin{equation*}
W_0=\D\int_{\pa\Om}g\cdot \psi_0\; \qquad \textrm{and} \qquad\; W=\D\int_{\pa\Om}g\cdot \psi ,
\end{equation*}
representing the measurements at our disposal. Observe that the following identities hold true
\begin{equation*}
W_0=2\D\int_{\Om}|e(u_0)|^2\; \qquad \textrm{and} \qquad\; W=2\int_{\Omega\backslash\overline D}|e(u)|^2 ,
\end{equation*}
giving us the information on the total deformation of the fluid in the corresponding domains, $\Omega$ and $\Omega \backslash \overline D$.
We will establish a quantitative estimate of the size of the obstacle D, $|D|$, in terms of the difference $W-W_0$. In order to accomplish this
goal, we will follow the main track of \cite{alessandrini2002detecting} and \cite{morassi2003detecting} applying fine interior regularity results,
Poincar\'{e} type inequalities and quantitative estimates of unique continuation for solutions of the stationary Stokes system.
The plan of the paper is as follows. In Section \ref{sec2} we provide the rigorous formulations of the direct problem and state the main results,
Theorems \ref{teo1}-\ref{teo2}. Section \ref{sec3} is devoted to the proofs of Theorems \ref{teo1}-\ref{teo2}. Finally in Section \ref{sec4} we show
some computational examples.

\setcounter{equation}{0}
\section{\bfseries Main results}\label{sec2}
\
\par

In this section we introduce some definitions and some preliminary results we will use through the paper and we will state our main theorems.

Let $x\in\R^d$, we denote by $B_{r}(x)$ the ball in $\R^d$ centered in $x$ of radius $r$. We will indicate by $\cdot$ the scalar product between
vectors or matrices. We set $x=(x_1,\ldots,x_{d})$ as $x=(x',x_{d})$, where $x'=(x_1,\ldots,x_{d-1})$.

\begin{definition}[Def. $2.1$ \cite{alessandrini2002detecting}]

Let $\Om\subset\R^d$ be bounded domain. We say that $\pa\Om$ is of class $C^{k,\alpha},$  with constants $\rho_0,\;M_0>0$, where $k$ is a nonnegative
integer and $\alpha\in[0,1)$, if, for any $x_0\in\pa\Om,$ there exists a rigid transformation of coordinates, in which $x_0=0$ and
\begin{equation*}
\Om\cap B_{\rho_0}(0)=\{x\in B_{\rho_0}(0):\;x_{n}>\vp(x')\},
\end{equation*}
where $\vp$ is a function of class $C^{k,\alpha}(B'_{\rho}(0)), \ k\geq 1$, such that
\begin{equation*}
\vp(0) =0,\ \qquad 
\nabla\vp(0)=0 \ \qquad \textrm{and} \ \qquad
\|\vp\|_{C^{k,\alpha}(B'_{\rho_0}(0))} \leq M_0\rho_0.
\end{equation*}
\end{definition}
When $k=0$ and $\alpha=1$ we will say that $\pa\Om$ is of Lipschitz class with constants $\rho_0,M_0$.

\begin{remark}
We normalize all norms in such a way that they are dimensionally equivalent to their argument, and coincide with the usual norms when $\rho_0=1$.
In this setup, the norm taken in the previous definition is intended as follows:
\begin{equation*}
\|\phi\|_{C^{k,\alpha}(B'_{\rho_0}(0))}=\D\sum_{i=0}^{k}\rho_0^{i}\|D^{i}\phi\|_{L^{\infty}(B_{\rho_0}'(0))}+\rho_0^{k+\alpha}|D^{k}\phi|_{\alpha,B_{\rho_0}'(0)},
\end{equation*}
where $|\cdot|$ represents the $\alpha$-H\"older seminorm
\begin{equation*}
\D |D^{k}\phi|_{\alpha,B_{\rho_0}'(0)}=\sup_{x',y'\in B_{\rho_0}'(0),x'\neq y'}\frac{|D^{k}\phi(x')-D^{k}\phi(y')|}{|x'-y'|^{\alpha}},
\end{equation*}
and $D^{k}\phi=\{D^{\beta}\phi\}_{|\beta|=k}$ is the set of derivatives of order $k$. Similarly we set the norms
\begin{equation*}
\D\|u\|_{L^2(\Om)}^2=\D\frac{1}{\rho_0^{d}}\int_{\Om}|u|^2\ \quad  \textrm{and} \ \quad
\D\|u\|_{H^1(\Om)}^2=\D \frac{1}{\rho_0^{d}}\left(\int_{\Om}|u|^2+\rho_0^2\int_{\Om}|\nabla u|^2\right).
\end{equation*}
\end{remark}

\subsection{Some classical results for Stokes problem}
\
\par

We now define the following quotient space since, if we consider incompressible models, the pressure is defined
only up to a constant.

\begin{definition}
Let $\Om$ be a bounded domain in $\R^d$. We define the quotient space
\begin{equation*}
L_{0}^{2}(\Om)=L^2(\Om)/\R,
\end{equation*}
 represented by the class of functions of $L^2(\Om)$ which differ by an additive constant.
We equip this space with the quotient norm
\begin{equation*}
\D\|v\|_{L_0^2(\Om)}=\inf_{\alpha\in\R}\|v+\alpha\|_{L^2(\Om)}.
\end{equation*}
\end{definition}

The Stokes problem has been studied by several authors and, since it is impossible to quote all the related relevant contributions,  we refer
the reader to the extensive surveys \cite{girault2012finite} and \cite{temam2001navier}, and the references therein.
We limit ourselves to present some classical results, useful for the treatment of our problem, concerning existence, uniqueness, stability
and regularity of solutions to the following boundary value problem
for the Stokes system
\begin{equation}\label{2.1}
\left\{\begin{array}{rllll}
-\Div(\sigma(u,p))&=&f& ,& \textrm{ in }\Om,\\
\Div \, u &=&0&,&\textrm{ in } \Om,\\
u&=&g&,& \textrm{ on }\pa\Om,
\end{array}
\right.
\end{equation}
where, for the sake of simplicity, from now on we assume $\mu(x)\equiv 1$, $\forall x\in\Omega$.

Concerning the well-posedness of this problem we have
\begin{theorem}[Existence and uniqueness, \cite{temam2001navier}] Let $\Om\subset\R^d$ be a bounded domain of class $C^2$, with $d\geq 2$.
Let $f\in (H^{-1}(\Om))^{d}$ and $g\in (H^{1/2}(\pa\Om))^{d}$ satisfying the compatibility condition
\begin{equation}\label{2.2}
\D\int_{\pa\Om}g\cdot n =0.
\end{equation}
Then, there exists a unique $(u,p)\in ((H^1(\Om))^{d}\times L_0^{2}(\Om))$ solution to problem \eqref{2.1}. Moreover, there exists a positive constant
$C$, depending only on $\Om$, such that
\begin{equation*}
\|u\|_{H^1(\Om)}+\|p\|_{L_0^2(\Om)}\leq C(\|f\|_{H^{-1}(\Om)}+\|g\|_{H^{1/2}(\partial\Om)}).
\end{equation*}
\end{theorem}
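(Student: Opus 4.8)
The plan is to follow the classical variational approach: reduce to homogeneous boundary data, solve a coercive problem on the space of divergence-free fields, and then recover the pressure by a De Rham / inf-sup argument. First I would lift the boundary datum. Since $g\in (H^{1/2}(\pa\Om))^d$ satisfies the compatibility condition $\int_{\pa\Om}g\cdot n=0$, one can construct a divergence-free extension $G\in (H^1(\Om))^d$ with $G|_{\pa\Om}=g$, $\Div G=0$, and $\|G\|_{H^1(\Om)}\le C\|g\|_{H^{1/2}(\pa\Om)}$; here the compatibility condition is exactly what the divergence theorem requires for such an extension to exist, since $\int_\Om\Div G=\int_{\pa\Om}g\cdot n$. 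Setting $w=u-G$, the new unknown lies in $(H_0^1(\Om))^d$ and is divergence-free, and the problem is recast in terms of $w$ with a modified right-hand side $\tilde f=f+\Div(\sigma(G,0))\in (H^{-1}(\Om))^d$.

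Next I would set up the weak formulation on the closed subspace $V=\{v\in (H_0^1(\Om))^d:\Div v=0\}$. Testing $-\Div(\sigma(u,p))=f$ against $v\in V$ and integrating by parts, the pressure term drops out because $\int_\Om p\,\Div v=0$ and, by symmetry, $e(u)\cdot\nabla v=e(u)\cdot e(v)$, leaving the symmetric bilinear form $a(w,v)=2\int_\Om e(w)\cdot e(v)=\langle \tilde f,v\rangle$. Continuity of $a$ is immediate, and coercivity on $V$ follows from Korn's inequality together with the Poincar\'e inequality on $H_0^1(\Om)$, which yield $a(v,v)\ge c\,\|v\|_{H^1(\Om)}^2$. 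The Lax--Milgram lemma then produces a unique $w\in V$ with the bound $\|w\|_{H^1(\Om)}\le C\|\tilde f\|_{H^{-1}(\Om)}$.

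The main obstacle will be recovering the pressure together with its bound. Consider the functional $v\mapsto a(w,v)-\langle\tilde f,v\rangle$ on all of $(H_0^1(\Om))^d$: by construction it annihilates $V$. By De Rham's theorem, equivalently the surjectivity of $\Div:(H_0^1(\Om))^d\to L_0^2(\Om)$, or the inf-sup (LBB) condition $\inf_{q}\sup_{v}\frac{\int_\Om q\,\Div v}{\|v\|_{H^1(\Om)}\|q\|_{L_0^2(\Om)}}\ge\beta>0$, there exists a unique $p\in L_0^2(\Om)$ with $a(w,v)-\langle\tilde f,v\rangle=\int_\Om p\,\Div v$ for every $v\in (H_0^1(\Om))^d$. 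This is the delicate point, since it encodes the geometry of $\Om$; the inf-sup constant $\beta$ depends only on $\Om$ through its $C^2$ (indeed Lipschitz) structure and is what controls $\|p\|_{L_0^2(\Om)}$ in terms of the data.

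Finally, the pair $u=w+G$ and $p$ solves \eqref{2.1}. Uniqueness follows from the coercivity of $a$ on $V$ for the velocity and from the inf-sup bound for the pressure, which forces $\nabla p=0$ hence $p=0$ in $L_0^2(\Om)$ when the data vanish. Collecting the constants coming from the lifting of $g$, the Korn--Poincar\'e coercivity, and the inf-sup inequality yields the stated stability estimate $\|u\|_{H^1(\Om)}+\|p\|_{L_0^2(\Om)}\le C(\|f\|_{H^{-1}(\Om)}+\|g\|_{H^{1/2}(\pa\Om)})$ with $C=C(\Om)$.
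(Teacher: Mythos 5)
The paper offers no proof of this statement, citing it directly from \cite{temam2001navier}, and your argument is precisely the classical proof given in that reference: divergence-free lifting of $g$ (for which the compatibility condition is exactly what is needed), Lax--Milgram with Korn--Poincar\'e coercivity on the divergence-free subspace of $(H_0^1(\Om))^d$, and recovery of the pressure via De Rham's theorem, i.e.\ the inf-sup property of the divergence operator. Your proof is correct and matches the standard route, including the correct identification of the inf-sup step as the point where the constant's dependence on $\Om$ enters.
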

Regarding the regularity, the following result holds
\begin{theorem}[Regularity of the Stokes problem, \cite{temam2001navier}]\label{TSR} Let $\Om$ be a bounded domain of class $C^{k+1,1}$ in $\R^d$,
with $k \in \mathbb{N} \cup \{0\}$ and $d \geq 2$. Then, for any $f\in (H^{k}(\Om))^{d}$ and $g\in (H^{k+3/2}(\pa\Om))^{d}$ satisfying \eqref{2.2},
the unique solution to \eqref{2.1} is such that
\begin{equation*}
(u,p)\in (H^{k+2}(\Om))^d\times H^{k+1}(\Om).
\end{equation*}
Moreover, we have
\begin{equation*}
\|u\|_{H^{k+2}(\Om)}+\|p\|_{H^{k+1}(\Om)}\leq C(\|f\|_{H^{k}(\Om)}+\|g\|_{H^{k+3/2}(\partial\Om)}),
\end{equation*}
where $C$ is a positive constant depending only on $\Om$.
\end{theorem}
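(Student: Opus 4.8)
The plan is to upgrade the $(H^1(\Om))^d\times L_0^2(\Om)$ solution provided by the previous existence theorem to the desired higher regularity by combining a reduction to homogeneous boundary data, interior estimates, boundary estimates obtained by flattening $\pa\Om$, and an induction on $k$. The system is elliptic in the Agmon--Douglis--Nirenberg (ADN) sense, so the argument is a quantitative version of the classical elliptic regularity bootstrap adapted to the velocity--pressure coupling.

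First I would reduce to the case $g=0$. Using that $\pa\Om$ is of class $C^{k+1,1}$ together with the trace theory, one constructs a divergence-free lifting $G\in(H^{k+2}(\Om))^d$ with $G|_{\pa\Om}=g$ and $\|G\|_{H^{k+2}(\Om)}\le C\|g\|_{H^{k+3/2}(\pa\Om)}$; the solenoidal extension can be obtained, for instance, by applying a Bogovskii-type operator after a standard lifting. Setting $v=u-G$, the pair $(v,p)$ solves \eqref{2.1} with vanishing boundary datum and right-hand side $\tilde f=f+\Delta G\in(H^k(\Om))^d$, where I have used that $\Div\,\sigma(G,0)=\Delta G$ for divergence-free $G$ and $\mu\equiv 1$. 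It therefore suffices to prove the estimate for $(v,p)$ with $g=0$. A useful auxiliary observation is that taking the divergence of the momentum equation $-\Delta v+\nabla p=\tilde f$ and using $\Div\,v=0$ yields $\Delta p=\Div\,\tilde f$, so the pressure solves a Poisson equation and inherits regularity accordingly.

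Next I would establish interior regularity. Localizing with a partition of unity supported away from $\pa\Om$, the functions $(v,p)$ satisfy a constant-coefficient Stokes system. Applying the method of interior difference quotients to the weak formulation (or, equivalently, invoking the ADN interior estimates together with the Poisson equation for $p$) yields $v\in H^{k+2}_{\mathrm{loc}}$ and $p\in H^{k+1}_{\mathrm{loc}}$ inside $\Om$, with the corresponding local bounds controlled by $\|\tilde f\|_{H^k(\Om)}$ and lower-order norms of $(v,p)$.

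The main work, and the principal obstacle, is the boundary regularity. Near each boundary point I would use the $C^{k+1,1}$ chart to flatten $\pa\Om$; the change of variables turns the Stokes system into a variable-coefficient elliptic system on a half-ball, with coefficients of class $C^{k,1}$, and turns the no-slip condition into a homogeneous Dirichlet condition on the flat part. Since the Dirichlet boundary conditions satisfy the complementing (Lopatinskii--Shapiro) condition for the Stokes system, I would take tangential difference quotients to gain regularity in the directions along the boundary, and then recover the missing normal derivatives algebraically from the equations together with $\Div\,v=0$ and the Poisson equation $\Delta p=\Div\,\tilde f$. Patching the local estimates through the partition of unity and returning to the original coordinates yields the global bound for the base case $k=0$, namely $(v,p)\in(H^2(\Om))^d\times H^1(\Om)$. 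Finally, I would close the argument by induction on $k$: assuming the result up to order $k-1$, the solution already lies in $(H^{k+1}(\Om))^d\times H^k(\Om)$, so the data $\tilde f\in(H^k(\Om))^d$ and $g\in(H^{k+3/2}(\pa\Om))^d$ can be fed back into the localized estimates to promote the regularity one further step to $(H^{k+2}(\Om))^d\times H^{k+1}(\Om)$, with the stated quantitative dependence absorbed into the constant $C=C(\Om)$.
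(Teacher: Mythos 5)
The paper contains no proof of this statement: it is quoted as a classical result from Temam's book \cite{temam2001navier} (going back to Cattabriga and to Agmon--Douglis--Nirenberg), so there is no internal argument to compare against, and your outline is essentially the canonical proof found in that cited literature --- reduction to homogeneous boundary data, interior difference quotients, boundary flattening with tangential difference quotients, algebraic recovery of the missing normal derivatives of $(v,p)$ from the momentum equation combined with the constraint $\Div\, v=0$ and the pressure equation $\Delta p = \Div\,\tilde f$, and a bootstrap induction on $k$. All of that is sound, including the verification of the complementing condition for Dirichlet data and the observation $\Div\,\sigma(G,0)=\Delta G$ for solenoidal $G$. The one step you state too casually is the solenoidal lifting: the Bogovskii operator is bounded from $H^{m}_{0}(\Om)\cap L^2_0(\Om)$ into $H^{m+1}_{0}(\Om)$, not from $H^{m}(\Om)$ without vanishing traces, so for $k\geq 1$ you cannot simply apply it to $\Div\,\tilde G$ for an arbitrary trace lifting $\tilde G$ of $g$. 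The repair is standard: the boundary condition constrains only $\tilde G|_{\pa\Om}$, so the normal-derivative traces $\pa_n^j\tilde G$, $1\leq j\leq k+1$, are free, and you can choose them so that $\Div\,\tilde G$ vanishes on $\pa\Om$ to order $k$; its zero mean is automatic from \eqref{2.2}, and then the Bogovskii correction applies and the rest of your argument goes through, yielding the stated estimate with $C=C(\Om)$.
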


\smallskip

\subsection{Preliminaries}
\
\par

In order to prove our main results we need the following a-priori assumptions on $\Om$,  $D$ and the boundary data $g$.
\begin{enumerate}
\item[\bfseries{(H1)}] $\Om\subset\R^d$ is a bounded domain with a connected boundary $\pa\Om$ of Lipschitz class with constants $\rho_0,M_0$.
Further, there exists $M_1>0$ such that
\begin{equation}\label{2}
|\Om|\leq M_1\rho_{0}^{d}.
\end{equation}

\item[\bfseries{(H2)}] $D\subset\Om$ is such that $\Od$ is connected and it is strictly contained in $\Om$, that is there exists a positive constant
$d_0$ such that
\begin{equation}\label{H2}
d(D,\pa\Om)\geq d_0 >0.
\end{equation}
Moreover, $D$ has a connected boundary
$\pa D$ of class $C^{2,\alpha}$, $\alpha \in (0,1]$, with constants $\rho,L$.
\item[\bfseries{(H3)}]
$D$ satisfies $({\bf H2})$ and the scale-invariant fatness condition with constant $Q>0$, that is
\begin{equation}\label{3}
diam(D)\leq Q\rho.
\end{equation}
\item[\bfseries{(H4)}] $g$ is such that
\begin{equation*}
g\in (H^{3/2}(\pa\Om))^{d},\quad g\not\equiv 0,\quad
\D\frac{\|g\|_{H^{1/2}(\pa\Om)}}{\|g\|_{L^{2}(\pa\Om)}}\leq c_0,
\end{equation*}
for a given constant $c_0>0$,
and satisfies the compatibility condition
\begin{equation*}
\D\int_{\pa\Om}g\cdot n =0.
\end{equation*}
Also suppose that there exists a point $P\in\pa\Om,$ such that,
\begin{equation*}
g=0\textrm{ on }\pa\Om\cap B_{\rho_0}(P).
\end{equation*}
\item[\bfseries{(H5)}] Since one measurement $g$ is enough in order to detect the size of $D$, we choose  $g$ in such a way
that the corresponding solution $u$ satisfies the following condition
\begin{equation}\label{bg}
\D\int_{\pa \Om}\sigma(u,p)n=0.
\end{equation}
\end{enumerate}
Concerning assumption {\bfseries(H5)}, the following result holds.
\begin{proposition}
There exists at least one function $g$ satisfying $({\bf H4})$ and $(\bf {H5})$.
\end{proposition}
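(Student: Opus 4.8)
The plan is to realize both the compatibility condition in $(\mathbf{H4})$ and the normalization $(\mathbf{H5})$ as \emph{finitely many bounded linear constraints} on an infinite-dimensional space of admissible boundary data, and then to extract a nonzero element from the intersection of their kernels. Concretely, I would work in
\[
X=\{g\in (H^{3/2}(\pa\Om))^{d}:\ g=0 \text{ on } \pa\Om\cap B_{\rho_0}(P)\}.
\]
Since $\pa\Om\setminus\overline{B_{\rho_0}(P)}$ has nonempty relative interior, $X$ is infinite-dimensional: one may place arbitrarily many linearly independent smooth vector fields with support in $\pa\Om\setminus\overline{B_{\rho_0}(P)}$. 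Fix once and for all a finite-dimensional subspace $V\subset X$ with $\dim V\ge d+2$, consisting of such smooth fields; every element of $V$ automatically lies in $(H^{3/2}(\pa\Om))^{d}$ and vanishes on $\pa\Om\cap B_{\rho_0}(P)$.

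Next I would introduce the relevant functionals on $V$. The flux functional $\ell_0(g)=\D\int_{\pa\Om}g\cdot n$ is one bounded linear constraint. For $(\mathbf{H5})$, let $(u,p)$ be the Stokes solution associated to $g$ and set $\ell_i(g)=\bigl(\D\int_{\pa\Om}\sigma(u,p)n\bigr)_i$ for $i=1,\dots,d$. By the existence, uniqueness and a priori estimate for the Stokes system, the map $g\mapsto(u,p)$ is linear and bounded, and the normal stress $\sigma(u,p)n$ is a well-defined element of $(H^{-1/2}(\pa\Om))^{d}$ depending continuously on $(u,p)$; pairing it with the constant vector fields $e_1,\dots,e_d$ shows that each $\ell_i$ is a bounded linear functional. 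Thus requiring $(\mathbf{H4})$--$(\mathbf{H5})$ amounts to asking $g\in\bigcap_{i=0}^{d}\ker\ell_i$.

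The intersection $V\cap\bigcap_{i=0}^{d}\ker\ell_i$ is cut out of $V$ by at most $d+1$ linear equations; since $\dim V\ge d+2$, it contains a vector $g\not\equiv 0$. This $g$ then vanishes near $P$, satisfies $\D\int_{\pa\Om}g\cdot n=0$ and $\D\int_{\pa\Om}\sigma(u,p)n=0$, and is nonzero, i.e. all the structural requirements of $(\mathbf{H4})$ and $(\mathbf{H5})$. For the ratio bound, I would invoke the equivalence of norms on the finite-dimensional space $V$: the quantity $\sup_{w\in V\setminus\{0\}}\|w\|_{H^{1/2}(\pa\Om)}/\|w\|_{L^{2}(\pa\Om)}$ is a finite constant $c_V$, so the chosen $g$ has $\|g\|_{H^{1/2}(\pa\Om)}/\|g\|_{L^{2}(\pa\Om)}\le c_V$, and one takes $c_0\ge c_V$.

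I expect the only genuinely delicate step to be the rigorous definition and continuity of the net-traction functionals $\ell_1,\dots,\ell_d$, which rest on the normal-trace theory for the stress of an $H^{1}$ Stokes solution (so that $\D\int_{\pa\Om}\sigma(u,p)n$ is well defined in $\R^d$ as the pairing of $\sigma(u,p)n\in (H^{-1/2}(\pa\Om))^{d}$ with constants, and depends boundedly on $g$); the remainder is soft linear algebra. It is worth recording that if $(u,p)$ denotes the free solution in the whole of $\Om$, problem \eqref{ec1}, then $-\Div(\sigma(u,p))=0$ in $\Om$ together with the divergence theorem forces $\D\int_{\pa\Om}\sigma(u,p)n=\int_{\Om}\Div(\sigma(u,p))=0$ automatically; in that case $\ell_1,\dots,\ell_d$ vanish identically, only the single flux constraint $\ell_0=0$ survives, and the construction reduces to combining two bump fields supported away from $P$ so as to cancel the flux.
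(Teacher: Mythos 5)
Your construction is essentially the paper's own proof in functional-analytic dress: both arguments exploit linearity of the map from Dirichlet data to the net Cauchy force together with a finite-dimensional counting argument to cancel the $d$ components of $\int_{\pa\Om}\sigma(u,p)n$ (the paper takes $d+1$ linearly independent data already satisfying $(\mathbf{H4})$ and uses linear dependence of the $d+1$ force vectors in $\R^d$; you cut a $(d+2)$-dimensional space $V$ by $d+1$ kernel conditions --- the same rank--nullity count). One wrinkle in your write-up: since $(\mathbf{H5})$ concerns the obstacle problem \eqref{P1}, whose velocity vanishes on $\pa D$ and is divergence free in $\Od$, that problem is solvable only when $\int_{\pa\Om}g\cdot n=0$; hence your functionals $\ell_1,\dots,\ell_d$ are not defined on all of $V$, but only on $V\cap\ker\ell_0$. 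This is instantly repaired by reordering the constraints: restrict first to $V_0=V\cap\ker\ell_0$, which has dimension at least $d+1$, and then impose the $d$ traction constraints on $V_0$ --- which is exactly the paper's count of $d+1$ admissible data producing force vectors in $\R^d$. (Your closing remark correctly identifies that for the free solution \eqref{ec1} the traction constraints are vacuous by the divergence theorem, so the content of $(\mathbf{H5})$ indeed lies in the obstacle problem.) On the plus side, your treatment is more careful than the paper's on two points it glosses over: the ratio bound $\|g\|_{H^{1/2}(\pa\Om)}/\|g\|_{L^{2}(\pa\Om)}\le c_0$ for the resulting linear combination, which you obtain from norm equivalence on the finite-dimensional space $V$ (the paper simply asserts that the combination ``satisfies $(\mathbf{H4})$''), and the requirement that all candidate data vanish on a common neighborhood $\pa\Om\cap B_{\rho_0}(P)$, which you build in from the start by fixing the support away from $P$.
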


\begin{proof} Consider $(d+1)$ linearly independent functions $g_{i}$ satisfying $({\bf H4})$, $i=1,\ldots,d+1$.

Let
\begin{equation*}
\D\int_{\pa\Om}\sigma(u_{i},p_{i})n=v_{i}\in\R^d,
\end{equation*}
where $(u_{i},p_{i})$ is the corresponding solution of \eqref{P1} associated to $g_{i}$, $i=1,\ldots,d+1$.

If, for some $i$, we have that $v_{i}=0$, then the result follows. So, assume that all the $v_{i}$ are different from the null vector.
Then, there exist some constants $\lambda_{i}$, with $i=1,\ldots,d+1$, not all zero,
such that
\begin{equation*}
\D\sum_{i=1}^{d+1}\lambda_{i}v_{i}=0
\end{equation*}
and we can choose our Dirichlet boundary data as
\begin{equation*}
g=\D\sum_{i=1}^{d+1}\lambda_{i}g_{i}.
\end{equation*}
Therefore, $g$ satisfies $({\bf H4})$ and since the Cauchy force is linear with respect to the Dirichlet boundary condition we have
\begin{equation*}
\D\int_{\pa\Om}\sigma(u,p)n=0,
\end{equation*}
where $(u,p)$ is the corresponding solution to \eqref{P1}, associated to $g$.
\end{proof}

\begin{remark}
Integrating the first equation of \eqref{P1} on $\Od$, applying the Divergence Theorem and using (\ref{bg}), we obtain
\begin{equation}\label{bgob}
\int_{\pa D}\sigma(u,p)n = 0.
\end{equation}
\end{remark}

\begin{remark}\label{R1}
Notice that the constant $\rho$ in $({\bf H2})$ already incorporates information on the size of $D$. In fact, an easy computation shows that if $D$
has a boundary of class $C^{2,\alpha}$ with constant $\rho$ and $L$, then we have
\begin{equation*}
|D|\geq C(L)\rho^{d}.
\end{equation*}
Moreover, if also condition $({\bf H3})$ is satisfied, then it holds
\begin{equation*}
|D|\leq C(Q)\rho^{d}.
\end{equation*}
\end{remark}

\begin{remark}\label{R1bis} If $D$ satisfies $({\bf H2})$, then there exists a constant $h_1>0$ such that (see \cite{alessandrini1998})
\begin{equation}\label{DH}
|D_{h_1}|\geq {1\over 2}|D|.
\end{equation}
where we set, for any $A \subset \mathbb{R}^d$ and $h>0$,
\begin{equation*}
A_{h}=\{x\in A:\;d(x,\pa A)>h \}.
\end{equation*}
\end{remark}
\subsection{Main results}
\
\par

Under the previous assumptions we consider the following boundary value problems. When the obstacle $D$ in $\Om$ is present,
the pair given by the velocity and the pressure of the fluid in $\Od$
is the weak solution $(u,p)\in (H^{2}(\Od))^{d}\times H^1(\Od)$ to
\begin{equation}\label{4}
\left\{\begin{array}{rllll}
-\Div(\sigma(u,p))&=&0& ,& \textrm{ in }\Od,\\
\Div u&=&0&,&\textrm{ in } \Od,\\
u&=&g&,& \textrm{ on }\partial\Omega,\\
u&=&0&,&\textrm{ on }\partial D.
\end{array}
\right.
\end{equation}
Then we can define the function $\psi$ by
\begin{equation}\label{2.6}
\D\psi=\sigma(u,p) n|_{\pa\Om}\in (H^{1/2}(\partial\Omega))^{d}
\end{equation}
and the quantity
\begin{equation*}
W=\D\int_{\pa\Om}(\sigma(u,p) n)\cdot u=\int_{\pa\Om}\psi\cdot g.
\end{equation*}
When the obstacle $D$ is absent, we shall denote by $(u_0,p_0)\in (H^2(\Om))^{d}\times H^1(\Om)$ the unique weak solution to the Dirichlet problem
\begin{equation}\label{5}
\left\{\begin{array}{rllll}
-\Div(\sigma(u_0,p_0))&=&0& ,& \textrm{ in }\Omega,\\
\Div u_0&=&0&,&\textrm{ in } \Omega,\\
u_0&=&g&,& \textrm{ on }\partial\Omega.\\
\end{array}
\right.
\end{equation}
Let us define
\begin{equation}\label{2.8}
\D\psi_0=\sigma(u_0,p_0) n|_{\pa\Om}\in (H^{1/2}(\partial\Omega))^{d},
\end{equation}
and
\begin{equation*}
W_0=\D\int_{\pa\Om}(\sigma(u_0,p_0) n)\cdot u_0=\int_{\pa\Om}\psi_0 \cdot g.
\end{equation*}
Our goal is to derive estimates of the size of $D$, $|D|$, in terms of $W$ and $W_0$.

\begin{theorem}\label{teo1}
Assume $\bf{(H1)}$, $\bf{(H2)}$, $\bf{(H4)}$ and $\bf{(H5)}$ . Then, we have
\begin{equation}\label{7}
|D|\leq\D K\left(\D\frac{W-W_0}{W_0}\right),
\end{equation}
where the constant $K$ depends on $\Omega, d, d_0, h_1, \rho_0, M_0, M_1$, and $\|g\|_{H^{1/2}(\partial\Omega)}/\|g\|_{L^{2}(\partial\Omega)}$.
\end{theorem}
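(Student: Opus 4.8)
The plan is to compare the two strain energies through the minimum–energy (variational) characterization of the Stokes solutions, obtain from it a lower bound for $W-W_0$ in terms of the local energy of $u_0$ on $D$, and finally convert that into an estimate for $|D|$ by a quantitative unique continuation property for $u_0$.

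First I would extend $u$ by zero inside the cavity, setting $\tilde u=u$ in $\Od$ and $\tilde u=0$ in $D$. Since $u=0$ on $\pa D$, the field $\tilde u$ lies in $(H^1(\Om))^d$, is divergence free in $\Om$, equals $g$ on $\pa\Om$, and satisfies $e(\tilde u)=0$ in $D$, so that $W=2\int_\Om|e(\tilde u)|^2$. Writing $\phi=\tilde u-u_0\in(H^1_0(\Om))^d$ (it vanishes on $\pa\Om$ and is divergence free) and expanding the square,
\begin{equation*}
W-W_0=2\int_\Om|e(\phi)|^2+4\int_\Om e(u_0)\cdot e(\phi).
\end{equation*}
The cross term vanishes: testing the weak formulation of \eqref{5} with $\phi$ and using $\Div\phi=0$ and $\phi|_{\pa\Om}=0$ gives $\int_\Om \sigma(u_0,p_0)\cdot\nabla\phi=2\int_\Om e(u_0)\cdot e(\phi)-\int_\Om p_0\,\Div\phi=0$, hence $\int_\Om e(u_0)\cdot e(\phi)=0$. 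As $\phi=-u_0$ in $D$, this yields the clean identity
\begin{equation*}
W-W_0=2\int_{\Od}|e(u-u_0)|^2+2\int_D|e(u_0)|^2\ \ge\ 2\int_D|e(u_0)|^2 .
\end{equation*}
Dividing by $W_0=2\int_\Om|e(u_0)|^2$, the theorem reduces to a lower bound of the form $\int_D|e(u_0)|^2\ge (C/\rho_0^{\,d})\,|D|\int_\Om|e(u_0)|^2$ with $C$ depending only on the a priori data; once this holds, $\frac{W-W_0}{W_0}\ge\frac{\int_D|e(u_0)|^2}{\int_\Om|e(u_0)|^2}\ge\frac{C}{\rho_0^{\,d}}|D|$, which is \eqref{7} with $K=\rho_0^{\,d}/C$.

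The heart of the matter is therefore a lower estimate for the local strain energy of $u_0$, which I would obtain from a \emph{Lipschitz Propagation of Smallness} (LPS) property for the strain: there is a constant $C_{LPS}>0$, depending only on $d,M_0,M_1,d_0,h_1$ and on the ratio $c_0$ in \textbf{(H4)}, such that for every $x_0$ with $d(x_0,\pa\Om)\ge d_0$ one has $\int_{B_{h_1/4}(x_0)}|e(u_0)|^2\ge C_{LPS}\int_\Om|e(u_0)|^2$. This is exactly where quantitative unique continuation for the Stokes system enters, via a three–sphere (or doubling) inequality propagated inward from the region $\pa\Om\cap B_{\rho_0}(P)$ where $g$ vanishes. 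The vanishing of $g$ near $P$ plays a double role: it anchors the propagation at the boundary, and it rules out $u_0$ being an infinitesimal rigid motion (which would make $e(u_0)\equiv0$), since a rigid field vanishing on an open portion of $\pa\Om$ is identically zero; the normalization $\|g\|_{H^{1/2}}/\|g\|_{L^2}\le c_0$ is what keeps $C_{LPS}$ uniform. I expect this unique continuation step to be the main obstacle, because the Stokes system is coupled and the pressure must be controlled simultaneously with the velocity.

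Finally I would conclude by a covering argument. By Remark \ref{R1bis}, $|D_{h_1}|\ge\frac12|D|$, and a maximal packing of $D_{h_1}$ by $h_1/2$–separated points produces $N\ge c_d\,|D_{h_1}|/h_1^{\,d}\ge c_d'\,|D|/h_1^{\,d}$ pairwise disjoint balls $B_{h_1/4}(x_j)$ with $x_j\in D_{h_1}$, so that $B_{h_1/4}(x_j)\subset D$ and $d(x_j,\pa\Om)\ge d_0$ by \textbf{(H2)}. Summing the LPS estimate over these disjoint balls gives
\begin{equation*}
\int_D|e(u_0)|^2\ \ge\ \sum_{j=1}^{N}\int_{B_{h_1/4}(x_j)}|e(u_0)|^2\ \ge\ N\,C_{LPS}\int_\Om|e(u_0)|^2\ \ge\ \frac{C}{\rho_0^{\,d}}\,|D|\int_\Om|e(u_0)|^2,
\end{equation*}
which is precisely the bound required in the reduction, yielding \eqref{7}. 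The condition \eqref{bg} of \textbf{(H5)}, together with Theorem \ref{TSR}, is used to guarantee the $H^2\times H^1$ regularity and well-posedness on which the energy identity and the unique continuation estimates rest.
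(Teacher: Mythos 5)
Your overall architecture coincides with the paper's: extend $u$ by zero to $\tilde u$, derive the energy identity $W-W_0=2\int_\Omega|e(\tilde u-u_0)|^2$, bound the local energy of $u_0$ on $D$ from below via quantitative unique continuation, and conclude with a covering of $D_{h_1}$ using Remark \ref{R1bis}. Your derivation of the identity through the vanishing cross term is correct, and in fact slightly cleaner than the paper's (which obtains the same identity from the four variational equalities \eqref{18}--\eqref{17}). The genuine gap is in your key lemma: you need Lipschitz propagation of smallness for the \emph{strain}, $\int_{B}|e(u_0)|^2\ge C\int_\Omega|e(u_0)|^2$, but the result actually available for the Stokes system --- Proposition \ref{pro1}, i.e.\ Proposition 3.1 of \cite{ballerini2010stable}, built on the three-balls inequality of \cite{lin2010optimal} --- controls the full gradient, $\int_{B_r(x)}|\nabla u_0|^2\ge C_r\int_\Omega|\nabla u_0|^2$. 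The strain version does not follow from the gradient version: the pointwise inequality $|e(u_0)|\le|\nabla u_0|$ goes the wrong way, and a local Korn inequality on a ball only bounds the distance of $\nabla u_0$ from a constant antisymmetric matrix, so $e(u_0)$ could a priori be much smaller than $\nabla u_0$ on a given ball (think of $u_0$ locally close to a rigid rotation). Establishing a strain LPS would require additional quantitative work that you acknowledge as ``the main obstacle'' but do not supply; as written, the central estimate is unproven.

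The fix is short and lands you exactly on the paper's route. Since $\phi=\tilde u-u_0\in(H^1_0(\Omega))^d$ with $\Div\,\phi=0$, two integrations by parts give $\int_\Omega\nabla\phi\cdot\nabla\phi^{T}=\int_\Omega(\Div\,\phi)^2=0$, hence $2\int_\Omega|e(\phi)|^2=\int_\Omega|\nabla\phi|^2$, and therefore $W-W_0=\int_\Omega|\nabla\phi|^2\ge\int_D|\nabla\phi|^2=\int_D|\nabla u_0|^2$; this is precisely the content of the paper's Lemma \ref{l1}, which reaches the same conclusion (up to a constant) via Korn's inequality. With the gradient in hand, Proposition \ref{pro1} applies verbatim, together with $W_0=2\int_\Omega|e(u_0)|^2\le2\int_\Omega|\nabla u_0|^2$, and your covering argument goes through, provided you choose the ball radius at most $\min\left(h_1/4,\,d_0/s\right)$ so that the centers lie in $\Omega_{sr}$ as Proposition \ref{pro1} requires (the paper uses cubes of side $\min\left(d_0/2,\,h_1/\sqrt d\right)$ and a single minimizing cube rather than summing over disjoint balls; the two devices are equivalent). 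One small correction: \textbf{(H5)} has nothing to do with $H^2\times H^1$ regularity, which comes from $g\in(H^{3/2}(\partial\Omega))^d$ and Theorem \ref{TSR}; the balance condition \eqref{bg} is used only in the proof of the lower bound, Theorem \ref{teo2}, to subtract the mean value $\overline u_0$ on $\partial D$, and it plays no role in the paper's proof of Theorem \ref{teo1}.
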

\begin{theorem}\label{teo2}
Assume  $\bf{(H1)}$, $\bf{(H2)}$, $\bf{(H3)}$ and $\bf{(H4)}$.
Then, it holds
\begin{equation}\label{8}
\D C\frac{(W-W_0)^2}{\| g\|_{H^{3/2}(\pa\Om)}^2 W_{0}}\leq |D|,
\end{equation}
where $C>0$ depends on $M_1,\rho_0, d, d_0, \rho,  L$, and $Q$.
\end{theorem}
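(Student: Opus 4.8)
The plan is to deduce \eqref{8} from an upper bound on the energy defect $W-W_0$, namely to prove
\begin{equation*}
\tfrac12(W-W_0)\leq C\,\|g\|_{H^{3/2}(\pa\Om)}\,\sqrt{W_0}\,|D|^{1/2},
\end{equation*}
after which \eqref{8} follows by squaring and dividing by $\|g\|_{H^{3/2}(\pa\Om)}^2\,W_0$. The starting point is the variational characterization of the Stokes system: $u$ minimizes the dissipation $\int_{\Od}|e(v)|^2$ among divergence-free fields with $v=g$ on $\pa\Om$ and $v=0$ on $\pa D$, while $u_0$ minimizes $\int_\Om|e(v)|^2$ among divergence-free fields with $v=g$ on $\pa\Om$; in particular $W-W_0\geq 0$, as seen by testing the second principle against the zero-extension of $u$. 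Consequently, for \emph{any} admissible competitor $\hat u$ for the obstacle problem,
\begin{equation*}
\tfrac12(W-W_0)=\int_{\Od}|e(u)|^2-\int_\Om|e(u_0)|^2\leq\int_{\Od}|e(\hat u)|^2-\int_\Om|e(u_0)|^2 .
\end{equation*}
I will choose $\hat u$ to coincide with $u_0$ away from $D$, so that the right-hand side collapses to an integral over a thin collar around $D$.

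First I would build the competitor. Fix a width $h=c\rho$, with $c$ small depending on $L,Q$, and set the collar $\Sigma_h=\{x\in\Om:\ d(x,D)<h\}\setminus\overline D$; by {\bf(H2)} this stays at distance $\gtrsim d_0$ from $\pa\Om$. Take a cut-off $\eta$ with $\eta\equiv 1$ on $\overline D$, $\mathrm{supp}\,\eta\subset \overline D\cup\Sigma_h$ and $|\nabla\eta|\leq C/h$, and put $\hat u=(1-\eta)u_0+z$, where $z\in(H^1_0(\Sigma_h))^d$ solves $\Div z=\nabla\eta\cdot u_0$ in $\Sigma_h$. This divergence problem is solvable because $\int_{\Sigma_h}\nabla\eta\cdot u_0=\int_{\pa D}u_0\cdot n=\int_D\Div u_0=0$, and the Bogovskii operator provides $z$ with $\|z\|_{H^1(\Sigma_h)}\leq C_B\,h^{-1}\|u_0\|_{L^2(\Sigma_h)}$. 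By construction $\hat u$ is divergence-free, equals $g$ on $\pa\Om$, vanishes on $\pa D$, and agrees with $u_0$ off $\Sigma_h$; hence $\hat u$ is admissible and
\begin{equation*}
\tfrac12(W-W_0)\leq\int_{\Sigma_h}|e(\hat u)|^2 .
\end{equation*}

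Next I would estimate the collar integral. Writing $e(\hat u)=e((1-\eta)u_0)+e(z)$ and using $|e((1-\eta)u_0)|\leq|e(u_0)|+C\,h^{-1}|u_0|$ together with the Bogovskii bound to absorb the $z$-term, one gets
\begin{equation*}
\int_{\Sigma_h}|e(\hat u)|^2\leq C\Big(\int_{\Sigma_h}|e(u_0)|^2+h^{-2}\int_{\Sigma_h}|u_0|^2\Big).
\end{equation*}
I then apply a mixed Hölder estimate to each term, placing the $L^\infty$ norm on the smooth factor and the $L^2$ norm on the other, e.g. $\int_{\Sigma_h}|u_0|^2\leq\|u_0\|_{L^\infty(\Sigma_h)}\,|\Sigma_h|^{1/2}\,\|u_0\|_{L^2(\Om)}$. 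Three inputs feed this: (i) interior regularity for the Stokes system (Theorem \ref{TSR}), combined with $d(D,\pa\Om)\geq d_0$, yields $\|u_0\|_{C^1(\overline{\Sigma_h})}\leq C\|g\|_{H^{3/2}(\pa\Om)}$; (ii) a Korn--Poincar\'e inequality, available because $u_0$ vanishes on the boundary patch $\pa\Om\cap B_{\rho_0}(P)$ granted by {\bf(H4)} (which excludes nonzero rigid motions), gives $\|u_0\|_{L^2(\Om)}\leq C\sqrt{W_0}$; (iii) {\bf(H2)}--{\bf(H3)} give $|\Sigma_h|\leq C|D|$ for $h\sim\rho$.

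Collecting these and recalling $h\sim\rho$, I obtain $\tfrac12(W-W_0)\leq C\|g\|_{H^{3/2}(\pa\Om)}\sqrt{W_0}\,|D|^{1/2}$, with $C$ depending only on the stated parameters; squaring and rearranging yields \eqref{8}. I expect the main obstacle to be the uniform control of the collar construction over the admissible class: one must verify that, for $h$ a small multiple of $\rho$, the set $\Sigma_h$ is a finite union of John (or Lipschitz) domains whose Bogovskii and Korn constants are bounded solely in terms of $L,Q,\alpha,\rho$. This is precisely where the $C^{2,\alpha}$-regularity of $\pa D$ and the fatness condition {\bf(H3)} are indispensable, and it is also the step that forces the constant $C$ in \eqref{8} to depend on $\rho$.
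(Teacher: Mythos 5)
Your argument is correct in substance, but it takes a genuinely different route from the paper's. The paper starts from the identity $W-W_0=\int_{\pa D}(\sigma(u,p)n)\cdot u_0$ of Lemma \ref{l1}, subtracts the mean $\overline{u}_0$ using $\int_{\pa D}\sigma(u,p)n=0$, applies the boundary Poincar\'e inequality \eqref{23} on $\pa D$, interior $C^1$ estimates for $u_0$ via biharmonicity (estimate \eqref{3.31}), and finally a trace theorem plus global $H^2$ regularity of $(u,p)$ on $\Od$ to bound $\|\sigma(u,p)n\|_{L^2(\pa D)}\leq C\|g\|_{H^{3/2}(\pa\Om)}$. You instead invoke the Dirichlet principle for the Stokes system and build a cut-off-plus-Bogovskii competitor, reducing everything to an integral over a collar of width $h\sim\rho$. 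Your route buys three things: (a) you use the obstacle solution $u$ only through its minimality, so you need no trace estimate for $\sigma(u,p)n$ on $\pa D$ and no $H^2$ regularity up to $\pa\Om$ (which, strictly speaking, Theorem \ref{TSR} does not provide under the merely Lipschitz hypothesis {\bf(H1)} --- a rough spot in the paper's own proof); (b) you never use the normalization $\int_{\pa D}\sigma(u,p)n=0$, i.e.\ {\bf(H5)}/\eqref{bgob}, which the paper's proof quietly invokes at \eqref{3.26} even though {\bf(H5)} is not listed among the hypotheses of the theorem; (c) since the only regularity you need is the interior $C^1$ bound on $u_0$, which follows from the energy estimate $\|u_0\|_{H^1(\Om)}\leq C\|g\|_{H^{1/2}(\pa\Om)}$ plus interior estimates, your argument actually yields the stronger inequality with $\|g\|_{H^{1/2}(\pa\Om)}$ in place of $\|g\|_{H^{3/2}(\pa\Om)}$, which implies \eqref{8}. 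The price is the geometric verification you correctly flag: the uniformity, in terms of $\rho,L,Q$, of the Bogovskii and cut-off constants on the collar $\Sigma_h$ --- this is where the $C^{2,\alpha}$ regularity and the fatness condition {\bf(H3)} enter your proof, whereas in the paper they enter through the Poincar\'e constant of Proposition \ref{pro2} and the estimate $\int_D|\nabla u_0|^2\leq C|D|\,W_0$. Two small repairs: take $h=\min(c\rho,d_0/2)$ rather than $h=c\rho$, so that $\Sigma_h$ stays in $\Om_{d_0/2}$ even when $\rho$ is large relative to $d_0$ (the resulting dependence of the Bogovskii constant on $\rho/h$ is admissible, since $C$ in \eqref{8} may depend on $\rho$, $d_0$, $M_1$, $\rho_0$); and the citation of Theorem \ref{TSR} for the bound $\|u_0\|_{C^1(\overline{\Sigma_h})}\leq C\|g\|_{H^{3/2}(\pa\Om)}$ should be replaced by interior estimates combined with the energy bound, exactly as in the paper's derivation of \eqref{3.31}, since Theorem \ref{TSR} is a global statement requiring boundary regularity that {\bf(H1)} does not supply.
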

\begin {corollary}\label{corol1}
Assume $\bf{(H1)}$--$\bf{(H5)}$. Then, there exist two positive constant $K$ and $C$ as in \eqref{7} and \eqref{8} such that
\begin{equation}\label{7bis}
C\frac{(W-W_0)^2}{\| g\|_{H^{3/2}(\pa\Om)}^2 W_{0}}\leq |D|\leq K\left(\D\frac{W-W_0}{W_0}\right).
\end{equation}
\end{corollary}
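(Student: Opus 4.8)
The plan is to derive both bounds from a single energy identity and then combine them. Since $(u,p)$ solves the Stokes system in $\Od$ with $u=g$ on $\pa\Om$ and $u=0$ on $\pa D$, while $(u_0,p_0)$ solves the problem in all of $\Om$ with the same boundary datum $g$, integration by parts gives $W=2\int_{\Od}|e(u)|^2$ and $W_0=2\int_\Om|e(u_0)|^2$. The first step is therefore to establish the fundamental energy comparison
\begin{equation*}
W_0-W=\int_{\Od}2|e(u-u_0)|^2+2\int_D|e(u_0)|^2,
\end{equation*}
or an inequality of this shape, by testing the two weak formulations against the difference $u-u_0$ and using that $u=0$ on $\pa D$ together with the incompressibility constraint to kill the pressure terms. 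This identity already shows $W_0-W\ge 0$ and isolates $2\int_D|e(u_0)|^2$ as the quantity controlling $|D|$ from both sides.

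For the upper bound (Theorem \ref{teo1}), I would show $W_0-W\le 2\int_D|e(u_0)|^2 + (\text{lower order})$ and then estimate $\int_D|e(u_0)|^2$ from above. The key is that $u_0$ is smooth in the interior (by the regularity Theorem \ref{TSR}, since $\pa D$ stays at distance $\ge d_0$ from $\pa\Om$), so $|e(u_0)|^2\le \|u_0\|_{C^1(D)}^2$ pointwise, giving $\int_D|e(u_0)|^2\le C\|u_0\|^2_{H^2}|D|$ up to constants, and $\|u_0\|_{H^2}$ is controlled by $\|g\|_{H^{3/2}}$, hence by $\|g\|_{L^2}$ via (H4). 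Dividing by $W_0\sim\|g\|^2_{L^2}$ (which requires a lower bound on $W_0$ in terms of $\|g\|_{L^2}$, obtainable from a trace/Poincaré-type argument and the normalization in (H4)) yields $|D|\le K(W_0-W)/W_0$. This is the content of \eqref{7}.

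For the lower bound (Theorem \ref{teo2}) one reverses the strategy: from the energy identity $W_0-W\le 2\int_D|e(u_0)|^2$ is the wrong direction, so instead I bound $(W_0-W)$ from above by a multiple of $\sqrt{|D|}$. Concretely, $W_0-W$ is controlled by $\int_D|e(u_0)|^2$ plus cross terms, and one estimates $\int_D|e(u_0)|^2\le \|e(u_0)\|^2_{L^\infty(D)}|D|$ and then, crucially, bounds $\|e(u_0)\|_{L^\infty(D)}$ from below away from zero using the \emph{quantitative unique continuation} (three-spheres inequality) for the Stokes system: $u_0$ cannot vanish to infinite order, so its gradient has a lower bound on a ball of radius $\sim\rho$ inside $D$ (here (H3) and Remark \ref{R1bis} guarantee $D$ contains such a ball, $|D_{h_1}|\ge\frac12|D|$). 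Squaring the resulting estimate $W_0-W\le C\|g\|_{H^{3/2}}\sqrt{|D|\,W_0}$ produces \eqref{8}.

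The main obstacle is the lower bound, specifically the quantitative unique continuation / doubling estimate that prevents $e(u_0)$ from degenerating on $D$; this is where the Stokes structure (as opposed to a scalar elliptic equation) makes the argument delicate, since one must propagate a lower bound from the boundary region where $g\not\equiv0$ into the interior through the system, controlling the coupling with the pressure. Once both one-sided estimates \eqref{7} and \eqref{8} are in hand under their respective hypotheses, Corollary \ref{corol1} is immediate: assuming all of $\bf{(H1)}$--$\bf{(H5)}$ simultaneously makes both theorems applicable to the same $g$, and chaining the two inequalities gives \eqref{7bis} with the same constants $C$ and $K$.
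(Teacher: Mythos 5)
Your final gluing step is exactly the paper's proof of the corollary: under $\bf{(H1)}$--$\bf{(H5)}$ both Theorems \ref{teo1} and \ref{teo2} apply to the same data, and \eqref{7bis} follows by chaining \eqref{7} and \eqref{8}. However, your reconstruction of the two one-sided bounds contains genuine errors. First, a sign error: since the extension $\tilde u$ of $u$ by zero on $D$ is an admissible divergence-free competitor with trace $g$, the correct identity (this is the content of Lemma \ref{l1}) is $W-W_0=2\int_{\Om}|e(\tilde u-u_0)|^2=2\int_{\Od}|e(u-u_0)|^2+2\int_{D}|e(u_0)|^2\geq 0$, so $W\geq W_0$; your version asserts $W_0-W\geq 0$, which is false and incompatible with the statement \eqref{7bis} itself, in which $W-W_0$ must be nonnegative.

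Second, and more seriously, you have swapped the roles of the two key tools, and in the upper bound your inequality points the wrong way. To prove $|D|\leq K(W-W_0)/W_0$ one needs $W-W_0\gtrsim |D|\,W_0$, i.e.\ a \emph{lower} bound $\int_{D}|\nabla u_0|^2\geq C|D|\int_{\Om}|\nabla u_0|^2$; this is precisely where the quantitative unique continuation enters, via the Lipschitz propagation of smallness (Proposition \ref{pro1}, built on the three-spheres inequality of \cite{lin2010optimal}) applied on a small cube inside $D_{h_1}$, combined with $\int_{\pa\Om}\psi_0\cdot g\leq 2\int_{\Om}|\nabla u_0|^2$. Your upper-bound sketch instead estimates $\int_{D}|e(u_0)|^2$ from \emph{above} by $C\|g\|^2|D|$, which after dividing by $W_0$ can only produce $|D|\gtrsim (W-W_0)/W_0$, a lower bound on $|D|$ --- it cannot bound $|D|$ from above. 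Conversely, for \eqref{8} no lower bound on $e(u_0)$ over $D$ is needed and unique continuation plays no role: the paper uses the boundary representation \eqref{20}, $W-W_0=\int_{\pa D}(\sigma(u,p)n)\cdot(u_0-\overline u_0)$, where subtracting the mean $\overline u_0$ is legitimate only because $\int_{\pa D}\sigma(u,p)n=0$ (the normalization \eqref{bgob} coming from $\bf{(H5)}$, entirely absent from your sketch), then H\"older, the Poincar\'e inequality \eqref{23} on $\pa D$ (where $\bf{(H2)}$--$\bf{(H3)}$ enter), the interior \emph{upper} estimate $\sup_D|\nabla u_0|\leq C\|u_0\|_{L^2(\Om)}$ via biharmonicity of $u_0$, and the trace/regularity bound $\|\sigma(u,p)n\|_{L^2(\pa D)}\leq C\|g\|_{H^{3/2}(\pa\Om)}$. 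Note finally that your energy identity alone cannot deliver \eqref{8}: it gives $W-W_0\geq 2\int_D|e(u_0)|^2$, the wrong direction, and the remaining term $\int_{\Od}|e(u-u_0)|^2$ is not controlled by $|D|$; passing to the surface integral on $\pa D$ is exactly how the paper sidesteps this, at the price of the quadratic dependence on $W-W_0$ in the lower bound.
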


\begin{remark}
We expect that a result similar to the one obtained in Corollary \ref{corol1} can be derived when we replace the Dirichet
boundary data with the condition
\begin{equation*}
\sigma(u,p)n = g, \quad {\rm on}\, \,  \partial \Omega,
\end{equation*}
$g$ satisfying suitable regularity assumptions and the compatibility condition
\begin{equation*}
\int_{\partial\Omega} g = 0.
\end{equation*}
\end{remark}

\setcounter{equation}{0}
\section{\bfseries Proofs of the main theorems}\label{sec3}
\
\par

The main idea of the proof of Theorem \ref{teo1} is an application of a three spheres inequality. In particular, we apply a result
contained in \cite{lin2010optimal} concerning the solutions to the following Stokes systems
\begin{equation}\label{9}
\left\{\begin{array}{rllll}
-\Delta u+A(x)\cdot\nabla u+B(x)u+\nabla p&=&0& ,& \textrm{ in }\Omega,\\
\Div u&=&0&,&\textrm{ in } \Omega.
\end{array}
\right.
\end{equation}
Then it holds:
\begin{theorem}[Theorem 1.1 \cite{lin2010optimal}]
Consider $0\leq R_0\leq 1$ satisfying $B_{R_0}(0)\subset\Om\subset\R^d$. Then, there exists a positive number $\tilde{R}<1$,
depending only on $d$, such that, if $0<R_1<R_2<R_3\leq R_0$ and $R_1/R_3<R_2/R_3<\tilde{R}$, we have
\begin{equation*}
\D\int_{|x|<R_2}|u|^2 dx\leq C\left(\int_{|x|<R_1}|u|^2 dx\right)^{\tau}\left(\int_{|x|<R_3}|u|^2 dx\right)^{1-\tau},
\end{equation*}
for $(u,p)\in(H^{1}(B_{R_0}(0)))^{d}\times H^{1}(B_{R_0}(0))$ solution to \eqref{9}. Here $C$ depends on $R_2/R_3$, $d$, and $\tau\in (0,1)$
depends on $R_1/R_3$, $R_2/R_3$, $d$. Moreover, for fixed $R_2$ and $R_3$, the exponent $\tau$ behaves like $1/(-\log R_1)$, when $R_1$ is sufficiently
small.
\end{theorem}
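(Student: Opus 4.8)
The plan is to recognize this three spheres inequality as the quantitative form of strong unique continuation for \eqref{9} and to derive it, as is standard, from a Carleman estimate with a radial weight, after first eliminating the pressure to reduce the coupled system to a single higher-order elliptic differential inequality for $u$. Taking the divergence of the momentum equation in \eqref{9} and using $\Div u=0$ gives the Poisson-type relation $\Delta p=-\Div(A\cdot\nabla u+Bu)$; applying the Laplacian to the momentum equation and substituting $\nabla\Delta p$ then yields
\[
\Delta^2 u=\Delta(A\cdot\nabla u+Bu)-\nabla\Div(A\cdot\nabla u+Bu).
\]
The right-hand side is a differential operator of order at most three in $u$, with coefficients controlled by the $W^{2,\infty}$ norms of $A$ and $B$, so on $B_{R_0}(0)$ the velocity satisfies the differential inequality $|\Delta^2 u|\le C(|\nabla^3 u|+|\nabla^2 u|+|\nabla u|+|u|)$, whose highest-order term is of order strictly below the left-hand side and is therefore absorbable in the Carleman step.

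Next I would establish a Carleman estimate for the bi-Laplacian with a radial weight $\varphi(x)=\phi(\log|x|)$, with $\phi$ strictly convex so that $\varphi$ is strongly pseudoconvex for $\Delta^2$ and behaves like $-\log|x|$ near the origin. Such an estimate takes the schematic form
\[
\sum_{|\beta|\le 3}\tau^{\,7-2|\beta|}\int e^{2\tau\varphi}|D^\beta w|^2\le C\int e^{2\tau\varphi}|\Delta^2 w|^2,\qquad w\in C_c^\infty\big(B_{R_3}(0)\setminus\{0\}\big),
\]
valid for all $\tau\ge\tau_0$. Because every derivative term on the left carries a positive power of $\tau$ (in particular $\tau^{1}$ for $|\beta|=3$), the entire right-hand side of the differential inequality can be absorbed into the left once $\tau$ is large, upgrading the estimate for $\Delta^2$ to one for the full Stokes operator.

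Then I would insert $w=\chi u$, where $\chi$ is a radial cutoff equal to $1$ on the annulus $\{R_1\le|x|\le R_2\}$ and supported in $\{R_1/2\le|x|\le R_3\}$. The commutator $[\Delta^2,\chi]$ is supported only in the two shells near $R_1$ and near $R_3$, and there one controls $\nabla u,\nabla^2 u,\nabla^3 u$ in $L^2$ by $\int|u|^2$ over slightly enlarged shells via interior elliptic (Caccioppoli) estimates. Using the monotonicity of $\varphi$ to pass from shells to full balls, one reaches a two-term bound
\[
e^{2\tau\varphi(R_2)}\int_{|x|<R_2}|u|^2\le C e^{2\tau\varphi(R_1)}\int_{|x|<R_1}|u|^2+C e^{2\tau\varphi(R_3)}\int_{|x|<R_3}|u|^2 .
\]
Dividing by $e^{2\tau\varphi(R_2)}$ and optimizing over $\tau\ge\tau_0$ (the case where the optimal $\tau$ would fall below $\tau_0$ being trivial, as then the small ball already dominates) produces the product form with interpolation exponent $\tau=(\varphi(R_2)-\varphi(R_3))/(\varphi(R_1)-\varphi(R_3))\in(0,1)$; since $\varphi(R_1)\sim-\log R_1$ as $R_1\to0$, this exponent behaves like $1/(-\log R_1)$, as claimed.

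The step I expect to be the main obstacle is the Carleman estimate together with the absorption: eliminating the pressure reintroduces up to third-order derivatives of $u$ on the right-hand side, so the weight must be strongly pseudoconvex for the fourth-order operator and the estimate must gain enough powers of $\tau$ on every derivative term, which is considerably more delicate than for a second-order scalar equation and also forces extra regularity on $A$ and $B$. A cleaner alternative, and the one effectively used in \cite{lin2010optimal}, is to prove a Carleman estimate directly for the Stokes system in the pair $(u,p)$, carrying the incompressibility constraint through the argument and thereby avoiding both the loss of control of the pressure and the differentiation of the coefficients.
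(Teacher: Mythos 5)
First, a point of orientation: the paper never proves this statement. It is imported verbatim as Theorem 1.1 of \cite{lin2010optimal} and used as a black box (its only role here is to feed the Lipschitz propagation of smallness in Proposition \ref{pro1}, which is itself quoted from \cite{ballerini2010stable}). So there is no internal proof to compare you against; the benchmark is the proof in the cited reference. Measured against that, your sketch has the right genre and a sound quantitative skeleton: the identity $\Delta^2 u=\Delta(A\cdot\nabla u+Bu)-\nabla\Div(A\cdot\nabla u+Bu)$ is correct, the powers $\tau^{7-2|\beta|}$ are exactly what iterating the second-order Carleman estimate twice produces, and optimizing the two-term bound in $\tau$ gives the interpolation exponent $(\varphi(R_2)-\varphi(R_3))/(\varphi(R_1)-\varphi(R_3))$, which indeed behaves like $1/(-\log R_1)$ for a weight comparable to $-\log|x|$.

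That said, there are genuine gaps between your sketch and the theorem as stated. (i) Regularity: the hypothesis is only $(u,p)\in (H^{1})^{d}\times H^{1}$, while you manipulate $\nabla^{3}u$ and $\Delta^{2}u$ pointwise and invoke Caccioppoli estimates for third derivatives; you need an interior regularity bootstrap for the Stokes system before any of that is meaningful. (ii) Coefficients: your reduction differentiates $A$ and $B$ (you yourself require $W^{2,\infty}$ control), whereas the theorem of \cite{lin2010optimal} is proved for merely bounded --- in fact mildly singular --- coefficients, and its constants depend on those coefficient bounds (a dependence the quoted statement silently drops); so your route proves a strictly weaker result, although for the application in this paper, where \eqref{9} is used with $A\equiv B\equiv 0$ so that $p$ is harmonic and $u$ biharmonic, the weaker version suffices. (iii) You never produce the dimensional threshold $\tilde{R}$ or explain why $R_{2}/R_{3}<\tilde{R}$ is needed; in Carleman proofs with power weights $|x|^{-\tau}$ this smallness is forced by the absorption of the commutator terms supported in the outer shell, and a complete argument must extract it from that bookkeeping. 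Finally, your closing guess about the method of \cite{lin2010optimal} is off: they do not run a Carleman estimate on the pair $(u,p)$. They eliminate the pressure by taking the curl, using $\Div u=0$ to obtain the weakly coupled second-order system $\Delta(\nabla\times u)=\nabla\times(A\cdot\nabla u+Bu)$ together with $\Delta u=-\nabla\times(\nabla\times u)$, and then apply Carleman estimates with weights $|x|^{-\tau}$ that tolerate right-hand sides in divergence form; this is precisely what avoids both the pressure and any differentiation of $A,B$, and it is also consistent with the restriction $d=2,3$ in the theorem. Your instinct that the fourth-order reduction is the weak point was correct; the fix used in the literature is the vorticity reduction, not a direct $(u,p)$ estimate.
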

Based on this result, the following proposition holds:

\begin{proposition}[Lipschitz propagation of smallness, Proposition 3.1 \cite{ballerini2010stable}]\label{pro1}
Let $\Om$ satisfy {\rm ({\bf H1})} and $g$ satisfies {\rm ({\bf H4})}.
Let $u$ be a solution to the problem
\begin{equation}\label{10}
\left\{\begin{array}{rllll}
-\Div(\sigma(u,p))&=&0& ,& \textrm{ in }\Om,\\
\Div u&=&0&,&\textrm{ in } \Om,\\
u&=&g&,& \textrm{ on }\pa\Om.\\
\end{array}
\right.
\end{equation}
Then, there exists a constant $s>1$, depending only on $d$ and $M_0$, such that for every $r>0$ there exists a constant $C_{r}>0$, such that for every $x\in \Omega_{s r}$, we have
\begin{equation}\label{11}
\D\int_{B_{r}(x)}|\nabla u|^{2}dx\geq C_{r}\int_{\Om}|\nabla u|^{2}dx,
\end{equation}
where the constant $C_{r}>0$ depends only on $d,M_0,M_1,\rho_0,r, \D\frac{\|g\|_{H^{1/2}(\pa\Om)}}{\|g\|_{L^{2}(\pa\Om)}}$.
\end{proposition}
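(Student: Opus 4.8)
The plan is to combine the three spheres inequality of \cite{lin2010optimal} with a covering argument and a chain-of-balls iteration, in the spirit of the Lipschitz propagation of smallness of \cite{alessandrini2002detecting,ballerini2010stable}. Throughout write $F_{\mathrm{tot}}=\int_{\Om}|\nabla u|^2$ and $F(\rho,x)=\int_{B_\rho(x)}|\nabla u|^2$ for $B_\rho(x)\subset\Om$. The first step is to upgrade the three spheres inequality, which is stated for $\int|u|^2$, to one for $\int|\nabla u|^2$. Since $\mu\equiv1$ and $\Div u=0$, the first equation of \eqref{10} reduces to $-\Delta u+\nabla p=0$; differentiating in $x_k$ shows that each pair $(\pa_{x_k}u,\pa_{x_k}p)$ solves a system of the form \eqref{9} with $A\equiv B\equiv0$, and by interior regularity for the Stokes system (cf. Theorem \ref{TSR}) these derivatives are locally in $H^1$. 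Applying the three spheres inequality to $v=\pa_{x_k}u$ and summing over $k$ through H\"older's inequality with exponents $1/\tau$ and $1/(1-\tau)$ yields, for concentric balls with admissible radius ratios, \[ F(R_2,x)\le C\,F(R_1,x)^{\tau}F(R_3,x)^{1-\tau}. \]

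Next I would locate a single \emph{good ball}. Covering the interior set $\Om_{sr}$ by finitely many balls $B_r(x_1),\dots,B_r(x_N)$ of radius $r$ with bounded overlap, the number $N$ is controlled by $M_1,\rho_0,d,r$ thanks to \eqref{2}. Since $g\not\equiv0$ the solution $u$ is nontrivial, and using that $g=0$ on $\pa\Om\cap B_{\rho_0}(P)$ together with a Caccioppoli estimate to bound the energy in the boundary layer, a fixed positive fraction of $F_{\mathrm{tot}}$ is carried by $\Om_{sr}$. Consequently at least one index $\bar\imath$ satisfies $F(r,x_{\bar\imath})\ge c\,F_{\mathrm{tot}}/N$.

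The core is the propagation by a chain. Fix $x\in\Om_{sr}$ and choose $s>1$ (depending only on $d$ and $M_0$) large enough that $B_{3r}(y)\subset\Om$ for every $y$ lying on a path joining $x$ to $x_{\bar\imath}$ inside the connected interior set. Discretize the path into centers $y_0=x,\dots,y_m=x_{\bar\imath}$ with $|y_j-y_{j+1}|\le r$, where $m$ is bounded in terms of $M_1,\rho_0,d,r$. Choosing $R_1=r$, $R_2=2r$, $R_3=3r$ and using $B_r(y_{j+1})\subset B_{2r}(y_j)$ together with $F(3r,y_j)\le F_{\mathrm{tot}}$, the three spheres inequality gives the recursion $a_{j+1}\le C\,a_j^{\tau}$ for the normalized quantities $a_j=F(r,y_j)/F_{\mathrm{tot}}\in(0,1]$. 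Iterating, $a_m\le C^{(1-\tau^m)/(1-\tau)}a_0^{\tau^m}$, and combining with $a_m\ge c/N$ from the previous step forces \[ \frac{F(r,x)}{F_{\mathrm{tot}}}=a_0\ge\Big(\frac{c}{N}\Big)^{1/\tau^{m}}C^{-(1-\tau^{m})/((1-\tau)\tau^{m})}, \] which is precisely \eqref{11} with $C_r$ equal to the right-hand side.

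The main obstacle is the geometric bookkeeping in the propagation step: the chain must be built so that every enlarged ball $B_{3r}(y_j)$ remains inside $\Om$, which is what fixes the admissible value of $s$ and forces its dependence on $d$ and $M_0$ through the Lipschitz character of $\pa\Om$ and the connectedness of $\Om_{sr}$; one must also verify that $N$, $m$, and hence $C_r$, depend only on the quantities listed in the statement. A secondary technical point is the good-ball step, namely ensuring that a definite share of $F_{\mathrm{tot}}$ lies away from $\pa\Om$, for which the vanishing of $g$ on a boundary patch and the nontriviality of $u$ are essential.
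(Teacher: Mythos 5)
Note first that the paper does not prove this proposition at all: it is quoted verbatim from \cite{ballerini2010stable} (Proposition 3.1 there), with the three-ball inequality of \cite{lin2010optimal} recorded immediately before as the result it rests on. Your reconstruction follows the same strategy as that cited proof: upgrading the three spheres inequality from $u$ to $\nabla u$ (differentiating the constant-coefficient Stokes system, so that each $(\partial_k u,\partial_k p)$ solves the same system, and summing over $k$ via H\"older with exponents $1/\tau$, $1/(1-\tau)$, which is correct), then a chain of balls inside $\Omega_{sr}$ giving the recursion $a_{j+1}\le C a_j^{\tau}$, whose iteration and the resulting explicit lower bound for $a_0$ you carry out correctly, with $s$, $N$ and $m$ controlled by $d$, $M_0$, $M_1$, $\rho_0$, $r$ as in the statement.

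There is, however, a genuine gap in your ``good ball'' step, which you dismiss as a secondary technical point but which is in fact the crux, and the only place where the ratio $\|g\|_{H^{1/2}(\partial\Omega)}/\|g\|_{L^{2}(\partial\Omega)}$ can enter $C_r$. You assert that a Caccioppoli estimate together with the vanishing of $g$ on $\partial\Omega\cap B_{\rho_0}(P)$ shows that a fixed positive fraction of $\int_\Omega|\nabla u|^2$ is carried by $\Omega_{sr}$. A Caccioppoli inequality bounds interior gradient energy by $L^2$ norms of $u$ on larger sets; it gives no upper bound on the energy in the boundary layer $\Omega\setminus\Omega_{sr}$ relative to the total energy, and the vanishing of $g$ on a patch plays no role in this step (in this paper it is used for nontriviality via unique continuation, in the proof of Theorem \ref{teo1}). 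What the proof in \cite{ballerini2010stable} actually establishes is a quantitative boundary-layer estimate of the form $\int_{\Omega\setminus\Omega_t}|\nabla u|^2\le\varepsilon(t)\int_\Omega|\nabla u|^2$ with $\varepsilon(t)\to0$ as $t\to0$ at a rate controlled by the a priori data and precisely by the frequency-type ratio bounded in $({\bf H4})$; since your argument never invokes that ratio, the dependence of $C_r$ claimed in the statement cannot come out of your sketch, and without such an estimate the existence of a good interior ball with $a_m\ge c/N$ is unjustified. A minor, fixable point in the chain step: the choice $R_1=r$, $R_2=2r$, $R_3=3r$ need not be admissible in the three-ball inequality of \cite{lin2010optimal}, which requires $R_2/R_3<\tilde R$ with $\tilde R=\tilde R(d)$ possibly smaller than $2/3$; one must rescale the radii and shorten the chain step accordingly, which only affects constants.
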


\smallskip

Following the ideas developed in \cite{alessandrini2002detecting}, we establish a key variational inequality relating the boundary data
$W-W_0$ to the $L^2$ norm of the gradient of $u_0$ inside the cavity $D$.
\begin{lemma}\label{l1}
Let $u_0\in (H^{1}(\Omega))^{d}$ be the solution to problem \eqref{5} and $u\in (H^{1}(\Od))^{d}$ be the solution to problem \eqref{4}.
Then, there exists a positive constant $C=C(\Om)$ such that
\begin{equation}\label{12}
\D\int_{D}|\nabla u_0|^{2}\leq C(W - W_0)=C \int_{\partial D}u_0\cdot\sigma(u,p)n,
\end{equation}
where $n$ denotes the exterior unit normal to $\pa D$.
\end{lemma}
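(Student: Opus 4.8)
The plan is to combine two integration-by-parts identities with a Korn--Poincar\'e transfer argument. First I would establish the asserted equality $W-W_0=\int_{\pa D}u_0\cdot\sigma(u,p)n$. Since $(u,p)$ solves \eqref{4} and $(u_0,p_0)$ solves \eqref{5}, both velocities are divergence free, coincide with $g$ on $\pa\Om$, and $u=0$ on $\pa D$. Applying Green's formula for the Stokes operator on $\Od$ to the pairs $(u,u_0)$ and $(u_0,u)$, and using $\Div\sigma(u,p)=\Div\sigma(u_0,p_0)=0$, one obtains $2\int_{\Od}e(u):e(u_0)=W-\int_{\pa D}u_0\cdot\sigma(u,p)n$ and $2\int_{\Od}e(u_0):e(u)=W_0$, the boundary term on $\pa D$ in the second identity vanishing because $u=0$ there. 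Since the left-hand sides coincide by symmetry of the strain pairing, subtracting gives the claimed equality, which already shows $W-W_0\geq 0$.

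Next, writing $w:=u-u_0\in(H^1(\Od))^d$, which is divergence free, vanishes on $\pa\Om$ and satisfies $w=-u_0$ on $\pa D$, I would expand $|e(u)|^2=|e(u_0)|^2+2\,e(u_0):e(w)+|e(w)|^2$ in $W=2\int_{\Od}|e(u)|^2$ and combine with $W_0=2\int_{\Om}|e(u_0)|^2=2\int_{\Od}|e(u_0)|^2+2\int_{D}|e(u_0)|^2$. The cross term is handled by a further integration by parts, $2\int_{\Od}e(u_0):e(w)=\int_{\pa D}u_0\cdot\sigma(u_0,p_0)n=2\int_{D}|e(u_0)|^2$, the last step being Green's formula on $D$. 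This produces the exact decomposition $W-W_0=2\int_{\Od}|e(w)|^2+2\int_{D}|e(u_0)|^2$, and in particular $\int_{\Od}|e(w)|^2\leq\tfrac12(W-W_0)$.

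Finally comes the bridge from the strain energy to $\int_{D}|\nabla u_0|^2$, which I expect to be the main obstacle: one cannot bound $\int_{D}|\nabla u_0|^2$ by $\int_{D}|e(u_0)|^2$ directly, since infinitesimal rigid rotations are divergence-free Stokes solutions with vanishing strain, so any Korn inequality confined to $D$ must fail uniformly. Instead I would transfer the information across the interface $\pa D$. Because $w$ vanishes on $\pa\Om$, Korn's inequality on the connected domain $\Od$ together with the Poincar\'e inequality gives $\|w\|_{H^1(\Od)}^2\leq C\int_{\Od}|\nabla w|^2\leq C\int_{\Od}|e(w)|^2\leq C(W-W_0)$; here the vanishing of $w$ on $\pa\Om$ is precisely what eliminates the rigid-motion kernel. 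By the trace theorem, $\|u_0\|_{H^{1/2}(\pa D)}=\|w\|_{H^{1/2}(\pa D)}\leq C\|w\|_{H^1(\Od)}$. Now $u_0|_{D}$ is the unique solution of the Stokes system in $D$ with Dirichlet datum $u_0|_{\pa D}$, the compatibility condition $\int_{\pa D}u_0\cdot n=0$ holding since $\Div u_0=0$, so the a priori estimate of the existence and uniqueness theorem yields $\|u_0\|_{H^1(D)}\leq C\|u_0\|_{H^{1/2}(\pa D)}$. Chaining these bounds gives $\int_{D}|\nabla u_0|^2\leq\|u_0\|_{H^1(D)}^2\leq C(W-W_0)$, which is the assertion; the constant depends on $\Om$ and $D$ through the Korn, Poincar\'e, trace and Stokes a priori estimates on $\Od$ and $D$, all controlled by the standing assumptions on the geometry.
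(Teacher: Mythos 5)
Your proof is correct, and its first half coincides with the paper's: the same two integration-by-parts identities give $W-W_0=\int_{\pa D}u_0\cdot\sigma(u,p)n$, and your energy decomposition $W-W_0=2\int_{\Od}|e(w)|^2+2\int_{D}|e(u_0)|^2$ with $w=u-u_0$ is precisely the paper's identity $\int_{\Om}|e(\tilde u-u_0)|^2=\frac12(W-W_0)$ split over $\Od$ and $D$, where $\tilde u$ denotes the extension of $u$ by zero in $\overline D$ (your aside that the boundary identity ``already shows $W-W_0\geq 0$'' is premature, since the sign only follows from this decomposition, but that is cosmetic). Where you genuinely diverge is the bridge from strain energy to $\int_{D}|\nabla u_0|^2$: you correctly note that Korn fails on $D$ alone because of rigid motions, and you transfer across the interface via Korn--Poincar\'e on $\Od$ for $w$ (vanishing on $\pa\Om$), the trace theorem on $\pa D$, and the well-posedness estimate for the Stokes Dirichlet problem in $D$ (with compatibility from $\Div u_0=0$); each of these steps is sound. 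The paper's route is shorter and sharper: since $\tilde u-u_0\in (H^1_0(\Om))^d$ and $\nabla(\tilde u-u_0)=-\nabla u_0$ pointwise in $D$, a single application of Korn's inequality on $\Om$ yields $\int_{D}|\nabla u_0|^2\leq\int_{\Om}|\nabla(\tilde u-u_0)|^2\leq C\int_{\Om}|e(\tilde u-u_0)|^2$ with $C$ depending only on $\Om$ (for $H^1_0$ fields Korn's first inequality even holds with a universal constant). Your chain instead produces a constant depending on $D$ as well --- through the Korn constant of $\Od$, the trace constant of $\pa D$, and the Stokes a priori constant of $D$ --- which is weaker than the stated $C=C(\Om)$; for the downstream size estimate of Theorem \ref{teo1} one would then have to make these constants uniform over all admissible $D$ via the a priori assumptions $({\bf H2})$, an extra step you assert but do not carry out. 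Replacing your last paragraph by the zero-extension trick recovers the statement verbatim and eliminates this dependence.
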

\begin{proof}
Let $(u,p)$ and $(u_0,p_0)$ be the solutions to problems \eqref{4} and \eqref{5}, respectively.
We multiply the first equation of \eqref{4} by $u_0$ and after integrating by parts, we have
\begin{equation}\label{3.5}
\D\int_{\Od}\sigma(u,p)\cdot\nabla u_0-\int_{\partial\Omega}(\sigma(u,p) n)\cdot u_0+\int_{\partial D}(\sigma(u,p) n)\cdot u_0=0,
\end{equation}
where $n$ denotes either the exterior unit normal to $\pa\Om$ or to $\pa D$.

In a similar way, multiplying the first equation of \eqref{5} by $u_0$, we obtain
\begin{equation}\label{3.6}
\D\int_{\Omega}\sigma(u_0,p_0)\cdot\nabla u_0-\int_{\partial\Omega}(\sigma(u_0,p_0) n)\cdot u_0=0.
\end{equation}
Now, replacing $\psi$ and $\psi_0$ into the equations \eqref{3.5}-\eqref{3.6}, we get
\begin{equation}\label{13}
\left\{\begin{array}{r}
\D\int_{\Od}\sigma(u,p)\cdot\nabla u_0-\int_{\partial\Omega}\psi\cdot g+\int_{\partial D}(\sigma(u,p) n)\cdot u_0=0,\\
\D\int_{\Omega}\sigma(u_0,p_0)\cdot\nabla u_0-\int_{\partial\Omega}\psi_0\cdot g=0.
\end{array}
\right.
\end{equation}
Let us define
\begin{equation*}
\tilde{u}(x) = \left\{
\begin{array}{rl}
u&\textrm{ if }x\in\Od,\\
0&\textrm{ if }x\in\overline{D}.
\end{array}\right.
\end{equation*}
Since $u=0$ on $\pa D$, we have $\tilde{u}\in (H^1(\Om))^{d}$.
So, multiplying \eqref{4} and \eqref{5} by $\tilde{u}$, we obtain
\begin{equation}\label{15}
\left\{\begin{array}{r}
\D\int_{\Od}\sigma(u,p)\cdot\nabla \tilde{u}-\int_{\partial\Omega}\psi\cdot g+\underbrace{\int_{\partial D}(\sigma(u,p) n)\cdot\tilde{u}}_{=0}=0,\\
\D\int_{\Od}\sigma(u_0,p_0)\cdot\nabla\tilde{u}-\int_{\partial\Omega}\psi_0\cdot g=0.
\end{array}
\right.
\end{equation}
Using the definition of $\sigma(u,p)$ in the first equation of \eqref{13}, we have
\begin{eqnarray*}
\label{3.9}
0=\D\int_{\Od}\sigma(u,p)\cdot\nabla u_0-\int_{\partial\Omega}\psi\cdot g+\int_{\partial D}(\sigma(u,p) n)\cdot u_0\\
=\D\int_{\Od}(2e(u)-pI)\cdot\nabla u_0-\int_{\partial\Omega}\psi\cdot g+\int_{\partial D}(\sigma(u,p) n)\cdot u_0\\
=\D\int_{\Od}2e(u)\cdot\nabla u_0-\int_{\Od}p(\Div \ u_0)-\int_{\partial\Omega}\psi\cdot g+\int_{\partial D}(\sigma(u,p) n)\cdot u_0\\
=\D\int_{\Od}2e(u)\cdot\nabla u_0-\int_{\partial\Omega}\psi\cdot g+\int_{\partial D}(\sigma(u,p) n)\cdot u_0,
\end{eqnarray*}
where we use the fact that $\Div \, u_0=0$.
For the next step, we need a different expression for the term $e(u)\cdot\nabla u_0$.
We claim that, for every $v\in (H^1(\Om))^{d}$ such that $\Div \, v=0$, we have $e(u)\cdot\nabla v=e(u)e(v)$. Indeed,
\begin{equation*}
\begin{array}{rl}
2e(u)\cdot\nabla v&\D =\left(\frac{\pa u_{i}}{\pa x_{j}}+\frac{\pa u_{j}}{\pa x_{i}}\right)\frac{\pa v_{i}}{\pa x_{j}}\\[0.4em]
&\D =\frac{1}{2}\left(\frac{\pa u_{i}}{\pa x_{j}}+\frac{\pa u_{j}}{\pa x_{i}}\right)\frac{\pa v_{i}}{\pa x_{j}}
+\frac{1}{2}\left(\frac{\pa u_{i}}{\pa x_{j}}+\frac{\pa u_{j}}{\pa x_{i}}\right)\frac{\pa v_{j}}{\pa x_{i}}\\[0.6em]
&\D =e(u)\cdot\nabla v+e(u)\cdot\nabla v^{T}=2e(u)\cdot e(v).
\end{array}
\end{equation*}
Therefore, equalities \eqref{13} and \eqref{15} can be rewritten as
\begin{eqnarray}
\D2\int_{\Od}e(u)\cdot e(u_0)-\int_{\partial\Omega}\psi\cdot g+\int_{\partial D}u_0\cdot(\sigma(u,p) n)=0,\label{18}\\
\D2\int_{\Omega}|e(u_0)|^{2}-\int_{\partial\Omega}\psi_0\cdot g=0,\label{19}\\
\D2\int_{\Od}|e(u)|^{2}-\int_{\partial\Omega}\psi\cdot g=0,\label{16}\\
\D2\int_{\Od}e(u_0)\cdot e(u)-\int_{\partial\Omega}\psi_0\cdot g=0.\label{17}
\end{eqnarray}
We note that if we subtract \eqref{17} from \eqref{18} we get
\begin{equation}\label{20}
\D\int_{\partial\Omega}(\psi-\psi_0)\cdot g=\int_{\partial D}u_0\cdot(\sigma(u,p) n).
\end{equation}
Now, let us consider the quadratic form
\begin{equation*}
\begin{array}{rl}
\D\int_{\Omega}e(\tilde{u}-u_0)\cdot e(\tilde{u}-u_0)& \D =\int_{\Omega}|e(u_0)|^{2}+\int_{\Od}|e(u)|^{2}-2\int_{\Od}e(u)\cdot e(u_0)\\
&\D =\frac{1}{2}\int_{\partial\Omega}\psi_0\cdot g+\frac{1}{2}\int_{\partial\Omega}\psi\cdot g-\int_{\partial\Omega}\psi_0\cdot g\\
&\D =\frac{1}{2}\int_{\partial\Omega}(\psi-\psi_0)\cdot g.
\end{array}
\end{equation*}
By Korn's inequality there exists a constant $C=C(\Om) >0,$ such that
\begin{equation*}
\D\int_{\Om}|\nabla (\tilde{u}-u_0)|^2\leq C\int_{\Om}|e(\tilde{u}-u_0)|^2.
\end{equation*}
Finally, by the chain of inequalities
\begin{equation*}
\begin{array}{rl}
&\displaystyle\D\int_{D}|\nabla u_0|^{2}=\int_{D}|\nabla(\tilde{u}-u_0)|^2 \displaystyle\leq\int_{\Om}|\nabla(\tilde{u}-u_0)|^2 \\
&\displaystyle\leq C\int_{\Om}|e(\tilde{u}-u_0)|^2=C\int_{\partial\Omega}(\psi-\psi_0)\cdot g = C(W-W_0),
\end{array}
\end{equation*}
and \eqref{20} the claim follows.
\end{proof}
Now, using the previous results, we are able to prove Theorem \ref{teo1}.
\begin{proof}
The proof is based on arguments similar to those used in \cite{alessandrini2002detecting} and \cite{alessandrini2004detecting}.
Let us consider the intermediate domain $\Om_{d_{0}/2}$. Recalling that $d(D,\pa\Om)\geq d_0$, we have $d(D,\pa\Om_{d_{0}/2})\geq\frac{d_0}{2}.$
Let $\epsilon=\min\left(\frac{d_0}{2},\frac{h_1}{\sqrt{d}}\right) >0$. Let us cover the domain $D_{h_1}$ with cubes $Q_{l}$ of side $\epsilon$,
for $l=1,\ldots,N$. By the choice of $\epsilon$, the cubes $Q_{l}$ are contained in $D$. Then,
\begin{equation}\label{22}
\D\int_{D}|\nabla u_{0}|^{2}\geq\int_{\cup_{l=1}^{N}Q_{l}}|\nabla u_0|^{2}\geq\frac{|D_{h_1}|}{\epsilon^{d}}\int_{Q_{\overline{l}}}|\nabla u_0|^{2},
\end{equation}
where $\overline{l}$ is chosen in such way that
\begin{equation*}
\D\int_{Q_{\overline{l}}}|\nabla u_0|^{2}=\min_{l}\int_{Q_{l}}|\nabla u_0|^{2}>0.
\end{equation*}
We observe that the previous minimum is strictly positive because, if not, then $u_0$ would be constant in $Q_{\overline l}$.
Thus, from the unique continuation property, $u_0$ would be constant in $\Om$ and since there exists a point $P\in\pa\Om,$ such that,
\begin{equation*}
g=0\textrm{ on }\pa\Om\cap B_{\rho_0}(P),
\end{equation*}
we would have that $u_0 \equiv 0$ in $\Om$, contradicting the fact that $g$ is different from zero. Then, the minimum is strictly positive.

Let $\overline{x}$ be the center of $Q_{\overline{l}}$. From the estimate \eqref{11} in Proposition \ref{pro1} with $x=\overline{x}$,
$r=\frac{\epsilon}{2}$, we deduce
\begin{equation}\label{3.17}
\D\int_{Q_{\overline{l}}}|\nabla u_0|^2 \geq C\int_{\Om}|\nabla u_0|^2.
\end{equation}
On account of Remark \ref{R1bis}, we obtain
\begin{equation}\label{3.18}
\D\int_{D}|\nabla u_0|^2 \geq \D\frac{\frac{1}{2}|D|}{\epsilon^{d}}C\int_{\Om}|\nabla u_0|^2=|D| {C}\int_{\Om}|\nabla u_0|^2.
\end{equation}
We estimate the right hand side of \eqref{3.18}. First, using \eqref{19} we have
\begin{eqnarray}\label{3.19}
\D\int_{\pa\Om}\psi_0\cdot g&=2\int_{\Om}|e(u_0)|^2=2\int_{\Om}\frac{|\nabla u_0+\nabla u_0^{T}|^2}{4}\\
&=2\left(\int_{\Om}\frac{|\nabla u_0|^2+|\nabla u_0^{T}|^2+2\nabla u_0 \cdot\nabla u_0^{T}}{4}\right)
\end{eqnarray}
Now, H\"older's inequality implies
\begin{equation}\label{3.20}
\D\int_{\pa\Om}\psi_0\cdot g\leq 2\int_{\Om}|\nabla u_0|^2.
\end{equation}
Then, coming back to \eqref{3.18}, we obtain that there exists a constant $K$, depending on $\Omega,d, d_0, h_1, rho_0, M_0, M_1$,
and $\|g\|_{H^{1/2}(\partial\Omega)}/\|g\|_{L^{2}(\partial\Omega)}$ such that
\begin{equation}\label{3.21}
\D\int_{D}|\nabla u_0|^2 \geq |D| K\int_{\pa\Om}\psi_0\cdot g.
\end{equation}
Combining \eqref{3.21} and Lemma \ref{l1} we have
\begin{equation}\label{3.22}
C\D\int_{\pa\Om}(\psi-\psi_0)\cdot g \geq \int_{D}|\nabla u_0|^2\geq \left({K}\int_{\pa\Om}\psi_0\cdot g\right)|D|.
\end{equation}
Therefore, we can conclude that
\begin{equation*}
|D|\leq \D {K}\frac{W-W_0}{W_0},
\end{equation*}
where $\tilde{K}$ is a constant depending on $\Omega,d, d_0, h_1, \rho_0,M_0, M_1$, and $\D\frac{\|g\|_{H^{1/2}(\partial\Omega)}}{\|g\|_{L^{2}(\partial\Omega)}}$.
\end{proof}

In order to prove Theorem \ref{teo2}, we make use of the following Poincar\'{e} type inequality.
\begin{proposition}[Proposition $3.2$ \cite{alessandrini2002detecting}]\label{pro2}
Let $D$ be a bounded domain in $\R^d$ of class $C^{2,\alpha}$ with constants $\rho,L$ and such that \eqref{3} holds.
Then, for every $u\in (H^{1}(D))^{d}$ we have
\begin{equation}\label{23}
\D\int_{\pa D}|u-\overline{u}|^{2}\leq \overline{C}\rho\int_{D}|\nabla u|^{2},
\end{equation}
where $\overline{u}=\D\frac{1}{|\pa D|}\int_{\pa D}u$
and the constant $\overline{C}>0$ depends only on $L,Q$.
\end{proposition}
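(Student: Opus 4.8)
The plan is to reduce \eqref{23} to a dimensionless statement by rescaling, and then to prove the rescaled inequality by combining the standard trace theorem with the ordinary Poincar\'e inequality, exploiting the variational characterization of the boundary mean. First I would remove the factor $\rho$ by a change of variables: setting $\tilde D=\rho^{-1}D$ and $\tilde u(y)=u(\rho y)$, the domain $\tilde D$ is of class $C^{2,\alpha}$ with constants $1$ and $L$ and satisfies $\mathrm{diam}(\tilde D)\le Q$. Since surface measure scales like $\rho^{d-1}$, the gradient like $\rho^{-1}$, the volume element like $\rho^{d}$, and averages are preserved ($\overline u=\overline{\tilde u}$), one checks directly that
\[
\int_{\pa D}|u-\overline u|^{2}=\rho^{d-1}\int_{\pa\tilde D}|\tilde u-\overline{\tilde u}|^{2},\qquad \rho\int_{D}|\nabla u|^{2}=\rho^{d-1}\int_{\tilde D}|\nabla\tilde u|^{2},
\]
so \eqref{23} is equivalent to the same inequality on $\tilde D$ with $\rho=1$. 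It therefore suffices to establish, for every domain in the normalized class (constants $1,L$, diameter $\le Q$), a bound $\int_{\pa\tilde D}|v-\overline v|^{2}\le\overline C\int_{\tilde D}|\nabla v|^{2}$ with $\overline C=\overline C(L,Q)$.

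For the normalized inequality I would proceed in three steps. Let $v_{D}=|\tilde D|^{-1}\int_{\tilde D}v$ denote the volume average. The trace theorem gives a constant $C_{T}$ with $\|w\|_{L^{2}(\pa\tilde D)}^{2}\le C_{T}\|w\|_{H^{1}(\tilde D)}^{2}$ for all $w\in H^{1}(\tilde D)$; applying it to $w=v-v_{D}$ and using $\nabla w=\nabla v$ yields
\[
\int_{\pa\tilde D}|v-v_{D}|^{2}\le C_{T}\Big(\int_{\tilde D}|v-v_{D}|^{2}+\int_{\tilde D}|\nabla v|^{2}\Big).
\]
The interior Poincar\'e inequality then controls $\int_{\tilde D}|v-v_{D}|^{2}\le C_{P}\int_{\tilde D}|\nabla v|^{2}$, so the right-hand side is bounded by $C_{T}(C_{P}+1)\int_{\tilde D}|\nabla v|^{2}$. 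Finally, since the boundary mean $\overline v$ is exactly the constant minimizing $c\mapsto\int_{\pa\tilde D}|v-c|^{2}$, we have $\int_{\pa\tilde D}|v-\overline v|^{2}\le\int_{\pa\tilde D}|v-v_{D}|^{2}$, which closes the estimate with $\overline C=C_{T}(C_{P}+1)$.

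The main obstacle is making the constants $C_{T}$ and $C_{P}$ depend only on $L$ and $Q$, uniformly over the whole normalized class, rather than on the individual domain. I would obtain this from the geometry encoded in the hypotheses: since $\pa\tilde D$ is $C^{2,\alpha}$ with constant $L$, each boundary point has a neighborhood in which $\pa\tilde D$ is the graph of a function with $C^{2,\alpha}$-norm controlled by $L$, while $\mathrm{diam}(\tilde D)\le Q$ limits the number $N=N(L,Q,d)$ of coordinate patches needed to cover $\pa\tilde D$. On each flattened patch the local trace and Poincar\'e estimates hold with a constant depending only on the graph bound, and a subordinate partition of unity glues them together, yielding global constants depending only on $L,Q,d$. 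Alternatively the same uniformity follows by compactness: the normalized class is precompact, and if no uniform constant existed one could extract domains $\tilde D_{n}$ and functions $v_{n}$ with $\int_{\pa\tilde D_{n}}|v_{n}-\overline{v_{n}}|^{2}=1$ and $\int_{\tilde D_{n}}|\nabla v_{n}|^{2}\to0$, pass to a limiting domain and limit function with vanishing gradient, and reach the contradiction $0=1$; the delicate point there is the convergence of the boundary integrals, which again rests on the uniform $C^{2,\alpha}$ structure.
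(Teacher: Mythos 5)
The paper does not actually prove Proposition \ref{pro2}; it is imported verbatim from \cite{alessandrini2002detecting}, so there is no internal argument to measure yours against, and your proposal must be judged on its own terms. On those terms it is essentially correct and follows the standard route to such boundary Poincar\'e inequalities: the scaling computation reducing \eqref{23} to the case $\rho=1$ is exact (including the observation that the boundary mean is scale-invariant, $\overline{u}=\overline{\tilde u}$); the passage from the volume average $v_D$ (which trace plus interior Poincar\'e naturally produce) to the boundary average $\overline v$ via the variational characterization of the minimizing constant is the right device; and you correctly identify that the entire burden is the uniformity of $C_T$ and $C_P$ over the normalized class with constants $1,L$ and diameter at most $Q$. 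One step, however, is stated too casually: a partition of unity glues local \emph{trace} estimates, but it does not glue local \emph{Poincar\'e} estimates, because a Poincar\'e inequality carries a normalization (subtraction of a mean) that is destroyed by cutoffs. The standard repair is a chaining argument: cover $\tilde D$ by $N(L,Q,d)$ overlapping sets of uniform Lipschitz character, compare the mean values of $v$ on consecutive overlapping sets, and sum along chains of length at most $N$. This is also exactly where connectedness must enter explicitly: Poincar\'e with the volume mean fails on disconnected sets, and you should note both that $D$ is a domain (hence connected) and that the uniform chain bound uses the quantitative form of the graph condition $\Om\cap B_{1}(0)=\{x_d>\vp(x')\}$ with $\|\vp\|_{C^{2,\alpha}}\leq L$, which forbids necks thinner than a scale determined by $L$ and so makes connectivity effective at a fixed scale. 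Your compactness alternative is a legitimate substitute; the delicate convergence of boundary integrals that you flag is indeed handled by Arzel\`a--Ascoli on the local graph functions, using the uniform $C^{2,\alpha}$ bounds. With the gluing done by chaining rather than by partition of unity, the proof closes, with $\overline C$ depending only on $L$, $Q$ (and the dimension $d$, which the paper fixes).
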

Using this result and Lemma \ref{l1} we can prove now Theorem \ref{teo2}.
\begin{proof}
Let $\overline{u}_0$ be the following number
\begin{equation}\label{3.25}
\overline{u}_0=\D\frac{1}{|\pa D|}\int_{\pa D}u_0.
\end{equation}
Then, we deduce that
\begin{equation}\label{3.26}
\D\int_{\pa D}(\sigma(u,p) n)\cdot u_0=\int_{\pa D}(\sigma(u,p) n)\cdot u_0 - \int_{\pa D}(\sigma(u,p) n)\cdot\overline{u}_0,
\end{equation}
because $\int_{\pa D}\sigma(u,p)\cdot n=0$. From equality \eqref{20} in Lemma \ref{l1}, we have
\begin{equation}\label{3.27}
W-W_0=\D\int_{\pa D}(\sigma(u,p) n)\cdot u_0=\int_{\pa D}(\sigma(u,p) n)\cdot (u_0-\overline{u}_0).
\end{equation}
Applying H\"older inequality in the right hand side of \eqref{3.27} we obtain
\begin{equation}\label{3.28}
W-W_0\leq \left(\int_{\pa D}|u_0-\overline{u_0}|^2\right)^{1/2}\left(\int_{\pa D}|\sigma(u,p) n|^2\right)^{1/2}.
\end{equation}
Now, using Poincar\'e inequality \eqref{23} in the first integral on the right hand side of \eqref{3.28}, we get
\begin{equation}\label{3.29}
W-W_0\leq C\left(\int_{D}|\nabla u_0|^2\right)^{1/2}\left(\int_{\pa D}|\sigma(u,p) n|^2\right)^{1/2},
\end{equation}
where $C>0$ depends on $|\Om|, Q, \rho$ and $L$.
The first integral on the right hand side of \eqref{3.29} can be estimated as
\begin{equation}\label{3.30}
\D\int_{D}|\nabla u_0|^2 \leq |D|^{1/2}\sup_{D}|\nabla u_0|.
\end{equation}

Now, we need to give an interior estimate for the gradient of $u_0$. For this, we observe that for the regularity of the Stokes problem we
have $u_0\in (H^2(\Om))^d$. Then, we may take the Laplacian of the second equation in \eqref{5}
\begin{equation*}
\Delta \Div \ u_0=0.
\end{equation*}
Therefore, commuting the differential operators, we obtain that the pressure is an harmonic function. This implies that each component of
$u_0$ is a biharmonic function.
Then, using interior regularity estimates for fourth order equations, we deduce that
\begin{equation}\label{3.31}
\D\sup_{D}|\nabla u_0|\leq C\|u_0\|_{L^2(\Om)},
\end{equation}
where the constant $C$ depends on $Q$, $|\Om|$ and $d_0$.  Estimate (\ref{3.31}) can be obtained considering the following results.
We know that the embedding from $H^4(\Om)$ to $C^{k}(\Om)$ is continuous for $0\leq k < 4-\frac{d}{2}$, with $d=2,3$. Then, in particular,
\begin{equation*}
\|u_0\|_{C^1(D)}\leq C \|u_0\|_{H^4(D)}.
\end{equation*}
Moreover, from the interior regularity of fourth order equations, see  \cite[Th. 8.3]{morassi2007size}, we obtain
\begin{equation*}
\|u_0\|_{H^4(D)}\leq C\|u_0\|_{H^2(\Om_{d_0/2})}.
\end{equation*}
Finally, considering the estimates in \cite{auscher2000equivalence} and \cite{boyer2012mathematical}, we have
\begin{equation*}
\|u_0\|_{H^2(\Om_{d_0/2})}\leq C\|u_0\|_{L^2(\Om_{d_0/4})}\leq C\|u_0\|_{L^2(\Om)},
\end{equation*}
and \eqref{3.31} holds. We refer to \cite{auscher2000equivalence,barton2014gradient,cordes1956erste}, and references therein, for more details on interior estimates for elliptic operators.

As the boundary data $g$ satisfies $\bf{(H4)}$, we use the classical Poincar\'e inequality and obtain
\begin{equation}\label{3.33}
\D \|u_0\|_{L^2(\Om)}\leq C\|\nabla u_0\|_{L^2(\Om)}.
\end{equation}
Therefore, by means of the inequality $\displaystyle\int_{\Om}|\nabla u_0|^2\leq C\displaystyle \int_{\pa\Om}\psi_0\cdot g$, we deduce
\begin{equation}
\D\left(\int_{D}|\nabla u_0|^2\right)^{1/2}\leq C |D|^{1/2} W_0^{1/2}.
\end{equation}
Now, concerning the second integral in \eqref{3.29} we note that from the Trace Theorem it follows
\begin{equation}\label{26}
\| \sigma(u,p)\cdot n\|_{L^2(\pa D)}\leq C (\| u\|_{H^2(\Od)}+\| p\|_{L^2(\Od)}),
\end{equation}
and applying Theorem \ref{TSR} we obtain the inequality
\begin{equation}\label{27}
\| \sigma(u,p)\cdot n\|_{L^2(\pa\Om)}\leq C(\| u\|_{H^2(\Od)}+\| p\|_{L^2(\Od)})\leq C \| g\|_{H^{3/2}(\pa\Om)}.
\end{equation}
Therefore, it holds
\begin{equation*}
C\D\frac{(W-W_0)^2}{\| g\|_{H^{3/2}(\pa\Om)}^2W_0}\leq |D|,
\end{equation*}
where $C$ depends on $M_1, \rho_0, d, \rho, L$ and Q. This completes the proof.
\end{proof}

We conclude the section observing that proof of Corollary \ref{corol1} is a straightforward consequence of Theorem \ref{teo1} and Theorem \ref{teo2}.

\section{\bfseries Computational examples}\label{sec4}
\
\par

In this section we will perform some numerical experiments to compute $|\frac{W-W_0}{W_0}|$ for classes of cavities for which our result holds.
In particular, we expect to collect numerical evidence that the ratio between $\frac{|D|}{|\Omega|}$ and $|\frac{W-W_0}{W_0}|$ is bounded from below and above 
by two constants indicating that, due to the limits of our technique, the estimate from below is not optimal. 
Indeed, the numerical experiments we perform give some preliminary indications that this conjecture is true.

Moreover, we are interested in studying the dependence of this ratio on $d_0$, which bounds from below the distance of $D$ from $\partial\Omega$, and  
the size of the inclusions. 

A more systematic analysis would require the knowledge of explicit solutions $u$ and $u_0$. This would allow to compute analytically the constants in the upper
and lower bounds, at least for some particular geometries.  On the contrary to the case in \cite{alessandrini2002detecting}, for
the Stokes system it is difficult to find explicit solutions.

For the experiments we use the free software \emph{FreeFem++} (see \cite{MR3043640}).  Moreover, in all numerical tests we consider a square  domain $\Om$, discretized with a mesh of $100\times 100$ elements, and with boundary condition $u|_{\pa\Om}=g$ as in Figure $4.1$. The datum $g$ 
satisfies the assumptions ${\bf (H4)}$ and ${\bf (H5)}$.

\begin{figure}[H]
\begin{center}
\includegraphics[width=0.8\textwidth]{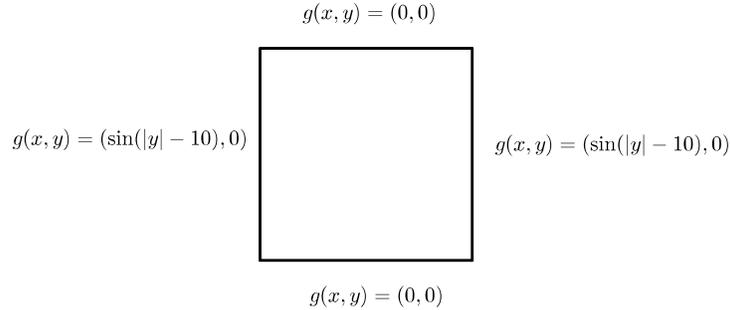}
\caption{Square domain in $2$-D with boundary condition $g$.}
\end{center}
\end{figure}

The first series of numerical tests has been performed by varying the position and the size of a circle inclusion $D$ with volume up to $8\%$ of the total size of the domain. 
In particular, we consider a circle inclusion with volume $0.2\%$, $3.1\%$ and $7.1\%$ with respect to $|\Omega|$. We have placed these circles in eight different positions, see Figure $4.2$. 
The results are collected in Figure $4.3$, $4.4$ and $4.5$, for different values of the distance $d_0$ between the object $D$ and the boundary of $\Om$. 
Also, the averages of all this simulations are collected in Figure $4.6$.

\begin{figure}[H]
\centering
\includegraphics[width=0.4\textwidth]{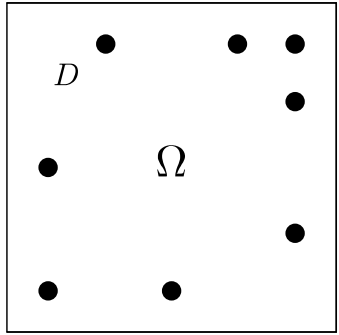}
\caption{The eight positions of the circle inclusion $D$.}
\end{figure}

In order to compare our numerical results with the theoretical upper and lower bounds \eqref{7} and \eqref{8}, it is interesting to study the relationship between $\frac{|D|}{|\Om|}$ and $|\frac{W-W_0}{W_0}|$. 
As we expected from the theory, the points $(\frac{W-W_0}{W_0},\frac{|D|}{|\Om|})$ are confined inside an angular sector delimited by two straight lines.

\begin{figure}[H]
\centering
\includegraphics[width=0.65\textwidth]{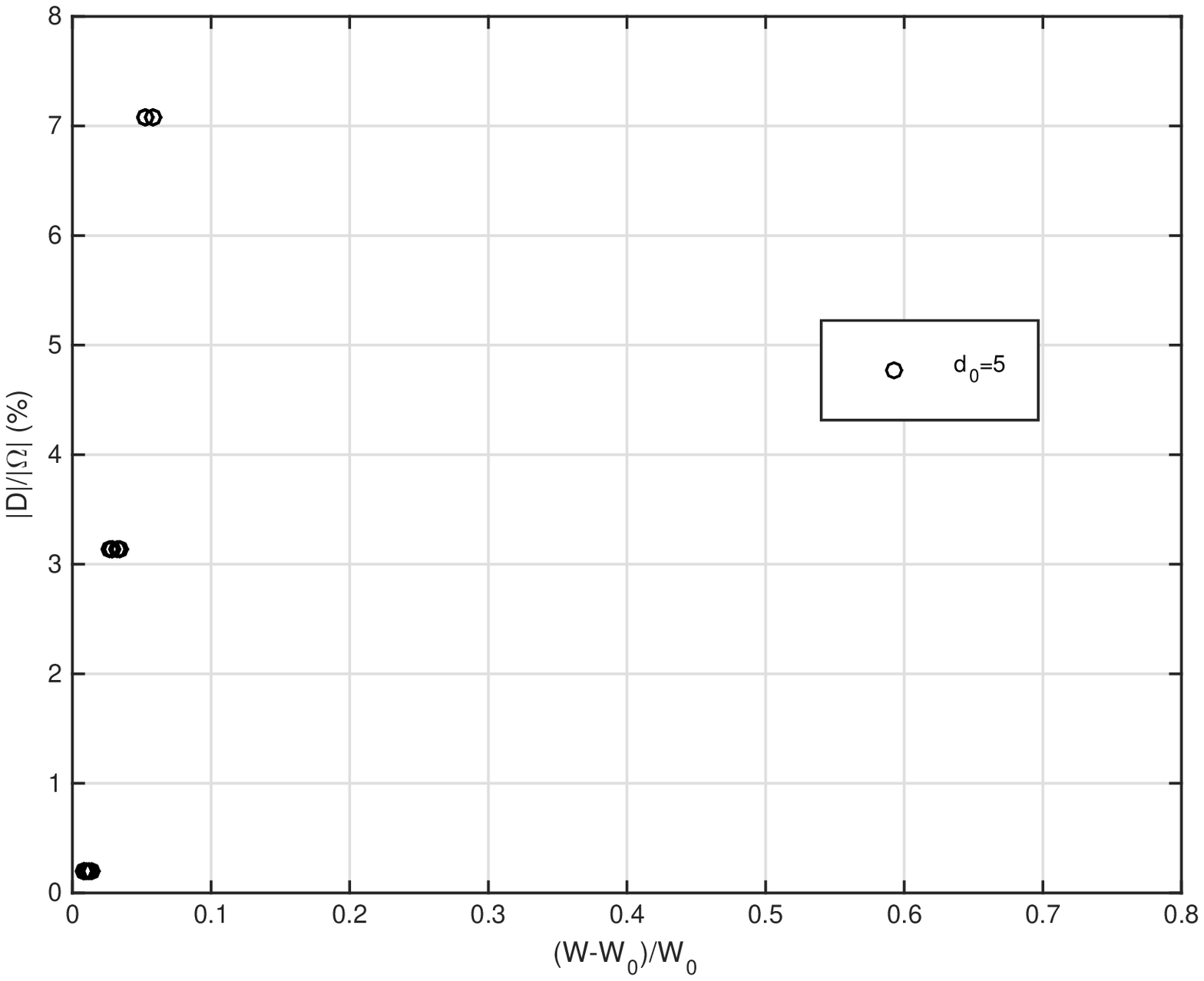}
\caption{Case $d_0=5$ for circle inclusion.}
\end{figure}

\begin{figure}[H]
\centering
\includegraphics[width=0.65\textwidth]{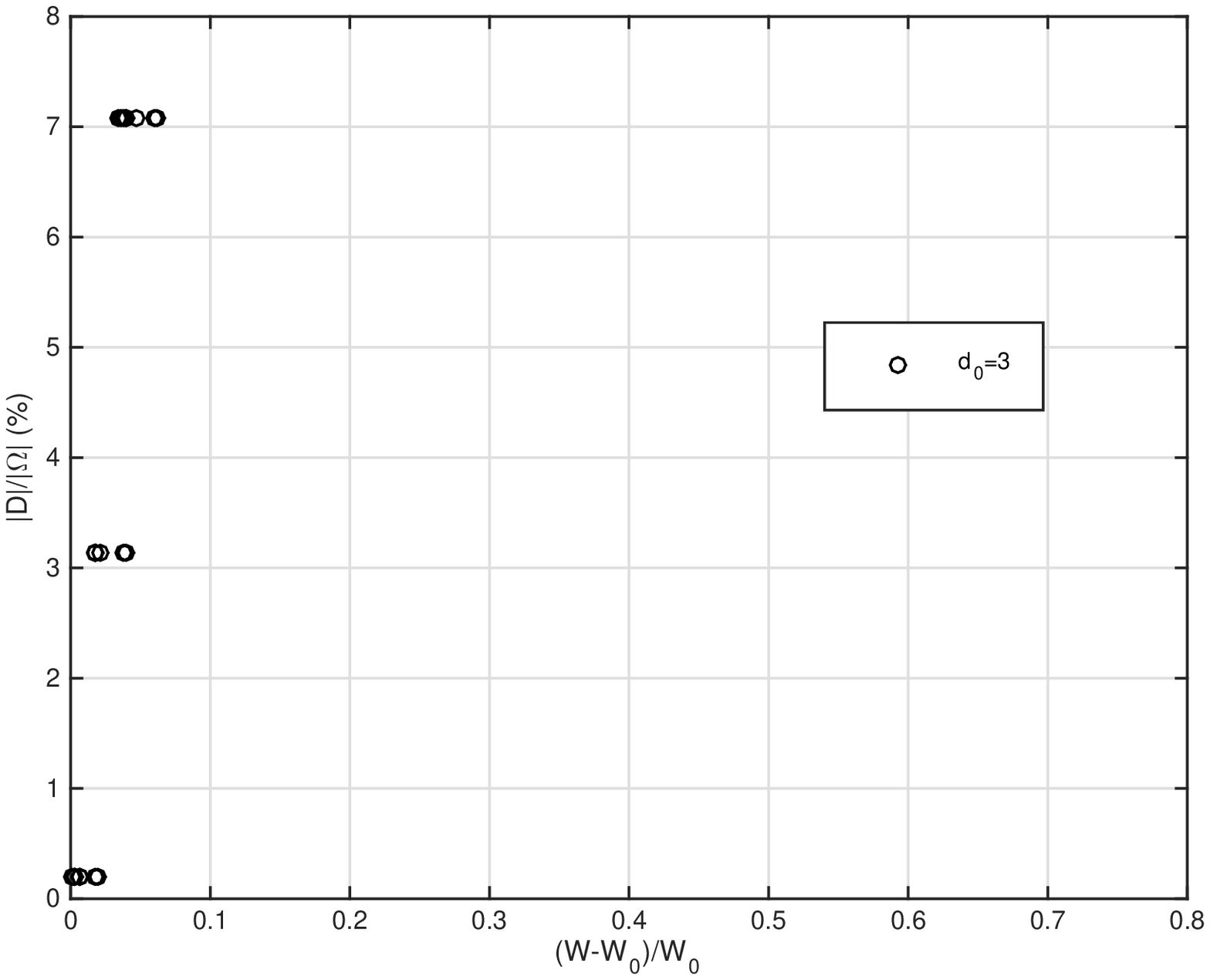}
\caption{Case $d_0=3$ for circle inclusion.}
\end{figure}

\begin{figure}[H]
\centering
\includegraphics[width=0.65\textwidth]{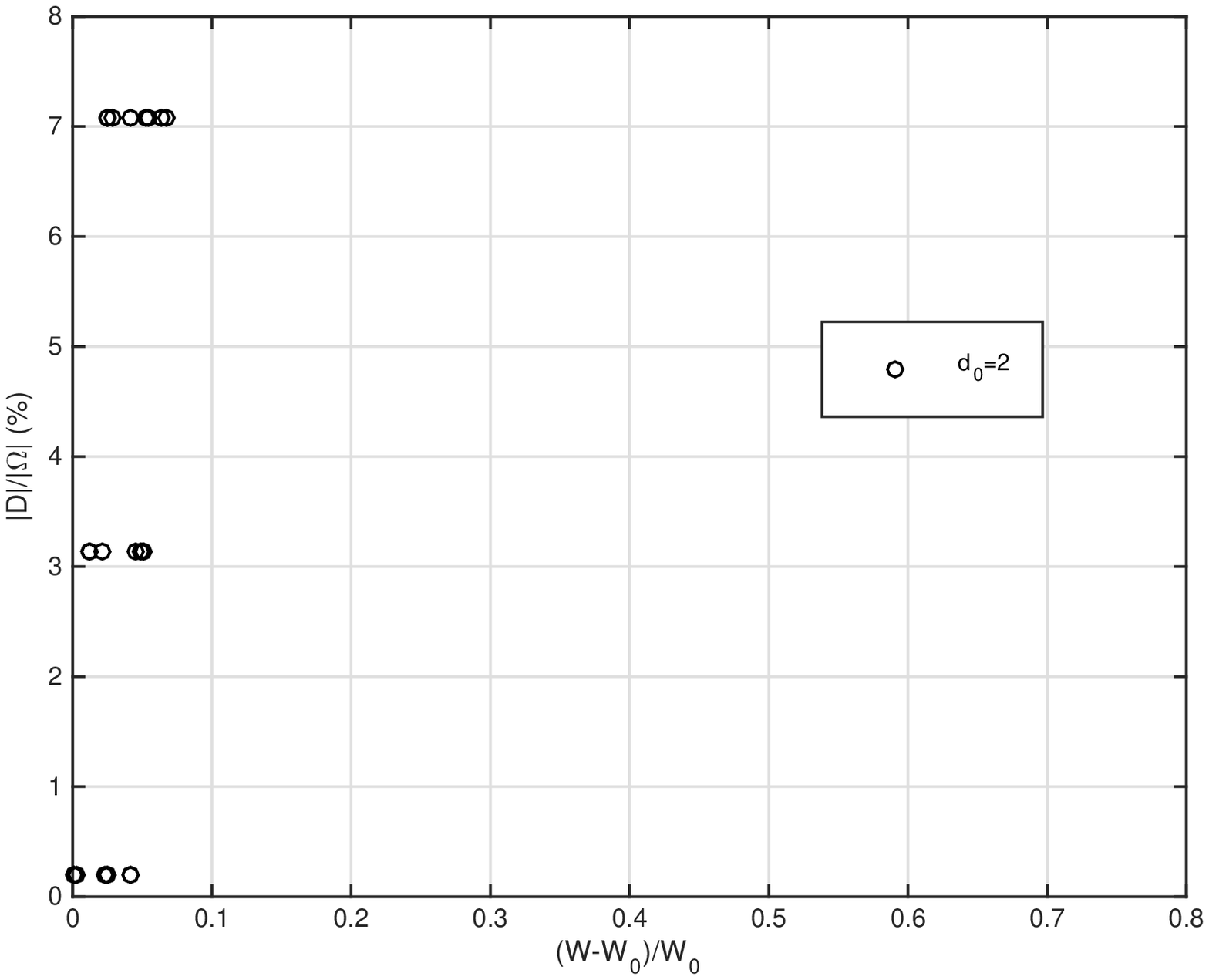}
\caption{Case $d_0=2$ for circle inclusion.}
\end{figure}

\begin{figure}[H]
\centering
\includegraphics[width=0.65\textwidth]{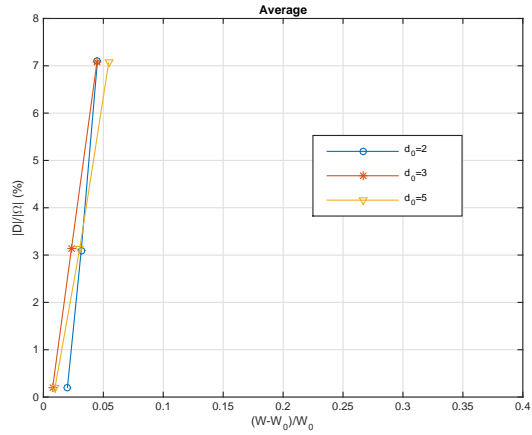}
\caption{Averages of the ratio $\frac{W-W_0}{W_0}$ with different $d_0$ for circle inclusion.}
\end{figure}

However, it is quite clear that when $d_0$ decreases, then the lower bound becomes worse. To illustrate this situation, we simulate also the case when the distance is $d_0=1$, 
see Figure $4.7$.

\begin{figure}[H]
\centering
\includegraphics[width=0.65\textwidth]{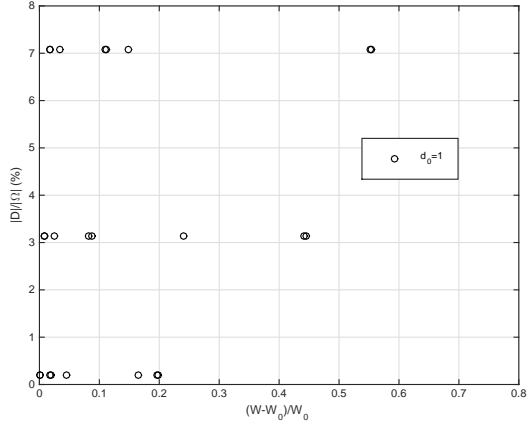}
\caption{Case $d_0=1$ for circle inclusion.}
\end{figure}

As a second class of experiments, we consider what happens when the size of the circle increases. 
In this case we can observe that the number $|\frac{W-W_0}{W_0}|$ grows rapidly when the volume occupies almost the entire domain. The result is collected in Figure $4.8$.

Again it is observed the relationship between the volume of the object with the quotient $(W-W_0)/W_0$. This gives us an indication that the estimates found in Theorems \ref{teo1} and \ref{teo2} 
involve constants that do not depend on the inclusion.

\begin{figure}[H]
\centering
\includegraphics[width=0.65\textwidth]{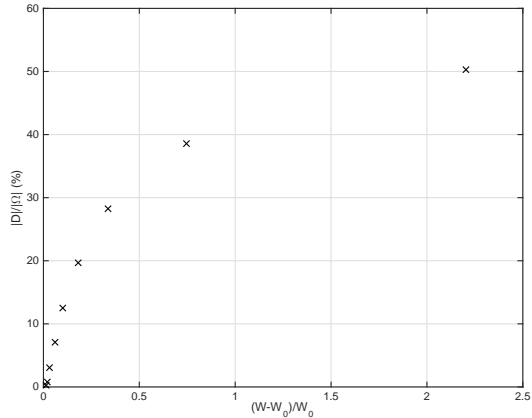}
\caption{Influence of the size of the circle.}
\end{figure}

\begin{remark}
From the previous analysis an interesting problem would be to find optimal lower and upper bounds for this model.

An other interesting issue would be to weaken the a-priori assumptions imposed on the obstacle, as for example the 
fatness condition (see, for instance, \cite{cristo2013size}, where this restriction is removed in the case of the shallow shell equations).

\end{remark}

\bigskip

\noindent \textbf{Acknowledgements.}  This work was partially supported by PFB03-CMM and Fondecyt 1111012.
The work of E. Beretta was supported by GNAMPA (Gruppo Nazionale per l'Analisi Matematica, la Probabilit\`a e le loro Applicazioni) of INdAM
(Istituto Nazionale di Alta Matematica) and part of it was done while the author was visiting New York University Abu Dhabi.
The work of C. Cavaterra was supported by the FP7-IDEAS-ERC-StG \#256872 (EntroPhase) and by GNAMPA (Gruppo Nazionale per l'Analisi Matematica,
la Probabilit\`a e le loro Applicazioni) of INdAM.
Part of this work was done while J. Ortega was visiting the Departamento de Matem\'atica, Universidad Aut\'onoma de Madrid - UAM and the Instituto de Ciencias
Matem\'aticas ICMAT-CSIC, Madrid, Spain.
The work of S. Zamorano was supported by CONICYT-Doctorado nacional 2012-21120662. Part of this work was done while S. Zamorano was visiting the 
Basque Center for Applied Mathematics and was partially supported by the Advanced Grant NUMERIWAVES/FP7-246775 of the European Research Council Executive Agency, 
the FA9550-15-1-0027 of AFOSR, the MTM2011-29306 and MTM2014-52347 Grants of the MINECO.

%\section*{References}

\end{document}